\newcommand{\ic}[1]{\left<#1\right>}
\newcommand{\fl}[1]{\left\lfloor#1\right\rfloor}
\newcommand{\p}[1]{(#1)}
\newcommand{\pb}[1]{\left(#1\right)}
\numberwithin{equation}{section}
\newtheorem{thm}{Theorem}
\newtheorem{pro}{Proposition}
\newtheorem{con}{Conjecture}
\newtheorem{definition}{Definition}
\begin{document}

%%% Start of article front matter
\begin{frontmatter}

\begin{fmbox}
\dochead{Research}

%%%%%%%%%%%%%%%%%%%%%%%%%%%%%%%%%%%%%%%%%%%%%%
%%                                          %%
%% Enter the title of your article here     %%
%%                                          %%
%%%%%%%%%%%%%%%%%%%%%%%%%%%%%%%%%%%%%%%%%%%%%%

\title{On Some Solutions to Hofstadter's $V$-Recurrence}

%%%%%%%%%%%%%%%%%%%%%%%%%%%%%%%%%%%%%%%%%%%%%%
%%                                          %%
%% Enter the authors here                   %%
%%                                          %%
%% Specify information, if available,       %%
%% in the form:                             %%
%%   <key>={<id1>,<id2>}                    %%
%%   <key>=                                 %%
%% Comment or delete the keys which are     %%
%% not used. Repeat \author command as much %%
%% as required.                             %%
%%                                          %%
%%%%%%%%%%%%%%%%%%%%%%%%%%%%%%%%%%%%%%%%%%%%%%

\author[
   addressref={aff1},                  % id's of addresses, e.g. {aff1,aff2}
   corref={aff1},                       % id of corresponding address, if any
 %  noteref={n1},                        % id's of article notes, if any
   email={altug.alkan@pru.edu.tr}   % email address
]{\inits{AA}\fnm{Altug} \snm{Alkan}}
\author[
   addressref={aff2},
   email={nfox@wooster.edu}
]{\inits{}\fnm{Nathan} \snm{Fox}}
\author[
   addressref={aff1,aff3},
   email={}
]{\inits{}\fnm{Orhan Ozgur} \snm{Aybar}}
\author[
   addressref={aff4},
   email={}
]{\inits{}\fnm{Zehra} \snm{Akdeniz}}

%%%%%%%%%%%%%%%%%%%%%%%%%%%%%%%%%%%%%%%%%%%%%%
%%                                          %%
%% Enter the authors' addresses here        %%
%%                                          %%
%% Repeat \address commands as much as      %%
%% required.                                %%
%%                                          %%
%%%%%%%%%%%%%%%%%%%%%%%%%%%%%%%%%%%%%%%%%%%%%%

\address[id=aff1]{%                           % unique id
  \orgname{Graduate School of Science and Engineering, Piri Reis University}, % university, etc
  \street{Tuzla},                     %
  %\postcode{}                                % post or zip code
  \city{Istanbul},                              % city
  \cny{Turkey}                                    % country
}
\address[id=aff2]{%                           % unique id
  \orgname{Department of Mathematical and Computational Sciences, The College of Wooster}, % university, etc
  %\street{Tuzla},                     %
  %\postcode{}                                % post or zip code
  \city{Wooster},                              % city
  \state{Ohio},
  \cny{USA}                                    % country
}
\address[id=aff3]{%                           % unique id
  \orgname{Department of Management Information Systems, Faculty of Economics and Administrative Sciences}, % university, etc
  \street{Tuzla},                     %
  %\postcode{}                                % post or zip code
  \city{Istanbul},                              % city
  \cny{Turkey}                                    % country
}
\address[id=aff4]{%                           % unique id
  \orgname{Faculty of Science and Letters, Piri Reis University}, % university, etc
  \street{Tuzla},                     %
  %\postcode{}                                % post or zip code
  \city{Istanbul},                              % city
  \cny{Turkey}                                    % country
}

%\address[id=aff2]{%
  %\orgname{Management and Information Systems Department, Piri Reis University},
  %\street{Tuzla},
  %\postcode{24105}
  %\city{Istanbul},
  %\cny{Turkey}
%}

%%%%%%%%%%%%%%%%%%%%%%%%%%%%%%%%%%%%%%%%%%%%%%
%%                                          %%
%% Enter short notes here                   %%
%%                                          %%
%% Short notes will be after addresses      %%
%% on first page.                           %%
%%                                          %%
%%%%%%%%%%%%%%%%%%%%%%%%%%%%%%%%%%%%%%%%%%%%%%

%\begin{artnotes}
%\note{Sample of title note}     % note to the article
%\note[id=n1]{Equal contributor} % note, connected to author
%\end{artnotes}

\end{fmbox}% comment this for two column layout

%%%%%%%%%%%%%%%%%%%%%%%%%%%%%%%%%%%%%%%%%%%%%%
%%                                          %%
%% The Abstract begins here                 %%
%%                                          %%
%% Please refer to the Instructions for     %%
%% authors on http://www.biomedcentral.com  %%
%% and include the section headings         %%
%% accordingly for your article type.       %%
%%                                          %%
%%%%%%%%%%%%%%%%%%%%%%%%%%%%%%%%%%%%%%%%%%%%%%

\begin{abstractbox}

\begin{abstract} % abstract
In this study, we explore the properties of certain solutions of Hofstadter's famous $V$-recurrence, defined by the nested recurrence relation $V(n)=V(n-V(n-1))+V(n-V(n-4))$. First, we discover the nature behind a finite chaotic meta-Fibonacci sequence in terms of mortality in the $V$-recurrence. Then, we construct a new kind of quasi-periodic solution which suggests a connection with another Hofstadter-Huber recursion, $H(n)= H(n-H(n-2)) + H(n-H(n-3))$.
\end{abstract}

%%%%%%%%%%%%%%%%%%%%%%%%%%%%%%%%%%%%%%%%%%%%%%
%%                                          %%
%% The keywords begin here                  %%
%%                                          %%
%% Put each keyword in separate \kwd{}.     %%
%%                                          %%
%%%%%%%%%%%%%%%%%%%%%%%%%%%%%%%%%%%%%%%%%%%%%%

\begin{keyword}
\kwd{meta-Fibonacci}
\kwd{Hofstadter V-recurrence}
\kwd{family of nested recursions}
\end{keyword}

% MSC classifications codes, if any
%\begin{keyword}[class=AMS]
%\kwd[Primary ]{}
%\kwd{}
%\kwd[; secondary ]{}
%\end{keyword}

\end{abstractbox}
%
%\end{fmbox}% uncomment this for twcolumn layout

\end{frontmatter}

%%%%%%%%%%%%%%%%%%%%%%%%%%%%%%%%%%%%%%%%%%%%%%
%%                                          %%
%% The Main Body begins here                %%
%%                                          %%
%% Please refer to the instructions for     %%
%% authors on:                              %%
%% http://www.biomedcentral.com/info/authors%%
%% and include the section headings         %%
%% accordingly for your article type.       %%
%%                                          %%
%% See the Results and Discussion section   %%
%% for details on how to create sub-sections%%
%%                                          %%
%% use~\cite{...} to cite references        %%
%% ~\cite{koon} and                         %%
%% ~\cite{oreg,khar,zvai,xjon,schn,pond}    %%
%%  \nocite{smith,marg,hunn,advi,koha,mouse}%%
%%                                          %%
%%%%%%%%%%%%%%%%%%%%%%%%%%%%%%%%%%%%%%%%%%%%%%

%%%%%%%%%%%%%%%%%%%%%%%%% start of article main body
% <put your article body there>

%%%%%%%%%%%%%%%%
%% Background %%
%%
\section{Introduction}
In the 1960s, Douglas Richard Hofstadter introduced the $Q$-sequence~\cite{18}, which is defined by the nested recurrence relation $Q\p{n}=Q\p{n-Q\p{n-1}}+Q\p{n-Q\p{n-2}}$ with initial conditions $Q\p{1}=Q\p{2}=1$. The resulting sequence appears to grow approximately like $\frac{n}{2}$, but with a lot of noise. It remains open whether the $Q$-sequence actually grows this way, or, in fact, whether the sequence is truly infinite. It is theoretically possible that, at some point, $Q\p{n}$ could exceed $n$. If this happens, $Q\p{n+1}$ and all subsequent terms would be undefined. If a sequence generated by a nested recurrence is finite in this way, we say the sequence \emph{dies}.

Due to a superficial resemblance to the definition of the Fibonacci sequence, sequences defined like the $Q$-sequence are known as \emph{meta-Fibonacci sequences}. 
Hofstadter and Greg Huber have since~\cite{1} studied the two-parameter generalization of meta-Fibonacci recursions: $Q_{r,s}(n)=Q_{r,s}(n-Q_{r,s}(n-r)) + Q_{r,s}(n-Q_{r,s}(n-s))$ with $r<s$. Based on this investigation, a well-behaved solution to the recurrence $V(n)=V(n-V(n-1))+V(n-V(n-4))$ was discovered empirically. The initial conditions $V(1) = V(2) = V(3) = V(4) = 1$ generate a monotone solution that includes every positive integer, a property now known as \emph{slow}. Later, the properties of this solution were confirmed with a proof~\cite{3,2}. During the process of this investigation, a variety of experiments on the V-recurrence were carried out in order to understand the behaviour of other probable solution sequences~\cite{2}. However, very little is known about the behaviour of the $V$-recurrence under different sets of initial conditions. This study aims to clarify the properties of other solutions and their curious connections with another nested recursion: $H(n)= H(n-H(n-2)) + H(n-H(n-3))$.
 %\cite{koon,oreg,khar,zvai,xjon,schn,pond,smith,marg,hunn,advi,koha,mouse}
 
 \subsection{Notation}
Going forward, un-decorated symbols such as $Q$, $V$, etc.\ will be used to denote specific sequences with those names. To refer to other sequences satisfying the same recurrences, we use the symbols with subscripts. Also, initial conditions are often denoted in this paper by sequences of numbers enclosed in angle brackets. For example, the initial conditions to the $V$-sequence would be written as $\ic{1,1,1,1}$.

\section{On Mortality of Remarkably Long Life}
Finding nested recurrence relations with increasing “mortality” and understanding the generational behaviour of them can be seen as a meaningful attempt in order to discover the nature of chaotic solutions~\cite{4}. In literature, there are some examples of long-living, finite, chaotic sequences which are produced by meta-Fibonacci recurrences. One remarkable example for the $V$-recurrence is that the initial conditions $\ic{3,1,4,4}$ generates a sequence that terminates after $474767$ terms~\cite{2}. Similarly, the recurrence $B_A(n)=B_A(n-B_A(n-1))+B_A(n-B_A(n-2))+B_A(n-B_A(n-3))$ with the initial conditions $\ic{1,1,1,4,3}$ dies~\cite{5} when $B_A(509871) = 519293$. More surprisingly, Isgur notes that $L_A(n)=L_A(n-19-L_A(n-3))+L_A(n-28-L_A(n-12))$ with initial conditions $L_A(n)=1$ for $1 \leq n \leq 29$ has relatively long life and it becomes incalculable after more than $19$ million terms~\cite{6}. Inspired by these curious examples, we study an exceptional chaotic sequence~\cite{7} $V_c$ that is generated by the $V$-recurrence with the initial conditions $\ic{3,4,5,4,5,6}$. Investigation of the behaviour of $V_{c}(n)$ may be highly illustrative since it has really long life~\cite{7} (more precisely $V_{c}(3080193026) = V_{c}(3080193026 - V_{c}(3080193025)) +  V_{c}(3080193026 - V_{c}(3080193022)) =V_{c}(2290654567) + V_{c}(1873687422) = 1686223049 + 1415176819 = 3101399868$), and it has a curious generational structure.

\subsection{Generational Structure}
Before we proceed, it is important to discuss the concept of a generational structure. A sequence generated by a two-term Hofstadter-Huber recurrence (such as the $Q$ or $V$-recurrence) has the property that each term is the sum of two earlier terms in the sequence. The indices of these earlier terms are known as the \emph{parents} of the current term, with the index coming from the first term in the recurrence known as the \emph{mother} or \emph{mother spot} and the second as the \emph{father} or \emph{father spot}. For example, in a sequence $V_a$ satisfying the $V$-recurrence, the mother of $V_a\p{n}$ is $n-V_a\p{n-1}$ and the father is $n-V_a\p{n-4}$.

Some sequences have the property that they can be partitioned into intervals where both parents of a term in one interval lie in the previous interval. If this is possible, the sequence is said to have a \emph{generational structure}. Sometimes, this is not possible, but it is almost possible. It is also useful to discuss generations in these cases~\cite{11,8,19,9,10}.

\subsection{Generations of $V_c(n)$}

In order to see the facts behind this long-lived finite sequence, we need to construct some auxilary sequences which analyze the generational structure of $V_{c}(n)$. With similar methodology of previous studies~\cite{11,8, 9}, we can define $W(n)$, $P_{s}(n)$ and $R(n)$ as below, see Table 1 and Table 2 for corresponding values. In our experimental range, these auxilary sequences are used in order to detect unpredictable sub-generations of the sequence which are responsible for termination of $V_{c}(n)$. See Figure 1 in order to observe generational boundaries of $V_{c}(n)$.

\newpage

\begin{definition}

Let $W(n)$ be the least $m$ such that minimum of the father $(m-V_{c}(m-4))$ and mother $(m-V_{c}(m-1))$ spots is equal or greater than $n$.

\end{definition}

\begin{definition}

Let $P_{s}(n) = W(P_{s}(n-1))$ for $n > 2$, with $P_{s}(1) = 1$ and $P_{s}(2) = 4$. Furthermore, define $P(n) = P_{s}(n) + 3$ for $n > 2$, with $P(1) = P_{s}(1)$ and $P(2) = P_{s}(2)$.

\end{definition}

\begin{definition}

Let $R(n)$ be the largest $m < P(n+1)-1$ such that $V_{c}(m+1) - V_{c}(m)$ is not 0 or 1 for $n > 2$, with $R(1) = 1$ and $R(2) = 4$.

\end{definition}

For a corresponding noise sequence, define $S_c(n) = V_c(n)-\frac{n}{2}$, Let $\big \langle S_{c}(n) \big \rangle_{k}$ denote the average value of  $S_{c}(n)$ over the $k^{th}$ generation's boundaries that are determined by $P(k)$ and $R(k)$, and define $\alpha(k, S_{c}(n))$ as below. 
\\

%\begin{itemize}
%\item
%$M_{k}(S_{c}(n))^2 = \big \langle S_{c}(n)^2 \big \rangle_{k} - \big \langle S_{c}(n) \big \rangle_{k}^2$
%\\
%\item
%$\alpha(k, S_{c}(n)) = \log_2\!\left(\frac{M_{k}(S_{c}(n))}{M_{k-1}(S_{c}(n))}\right)$\label{p:alpha}
%\end{itemize}

\begin{equation}
\begin{cases}
M_{k}(S_{c}(n))^2 = \big \langle S_{c}(n)^2 \big \rangle_{k} - \big \langle S_{c}(n) \big \rangle_{k}^2\\
\\
\alpha(k, S_{c}(n)) = \log_2\!\left(\frac{M_{k}(S_{c}(n))}{M_{k-1}(S_{c}(n))}\right)\label{p:alpha}
\end{cases}
\end{equation}
%\newpage

\begin{figure}[!hb]
\begin{center}
\includegraphics[width=0.7\textwidth]{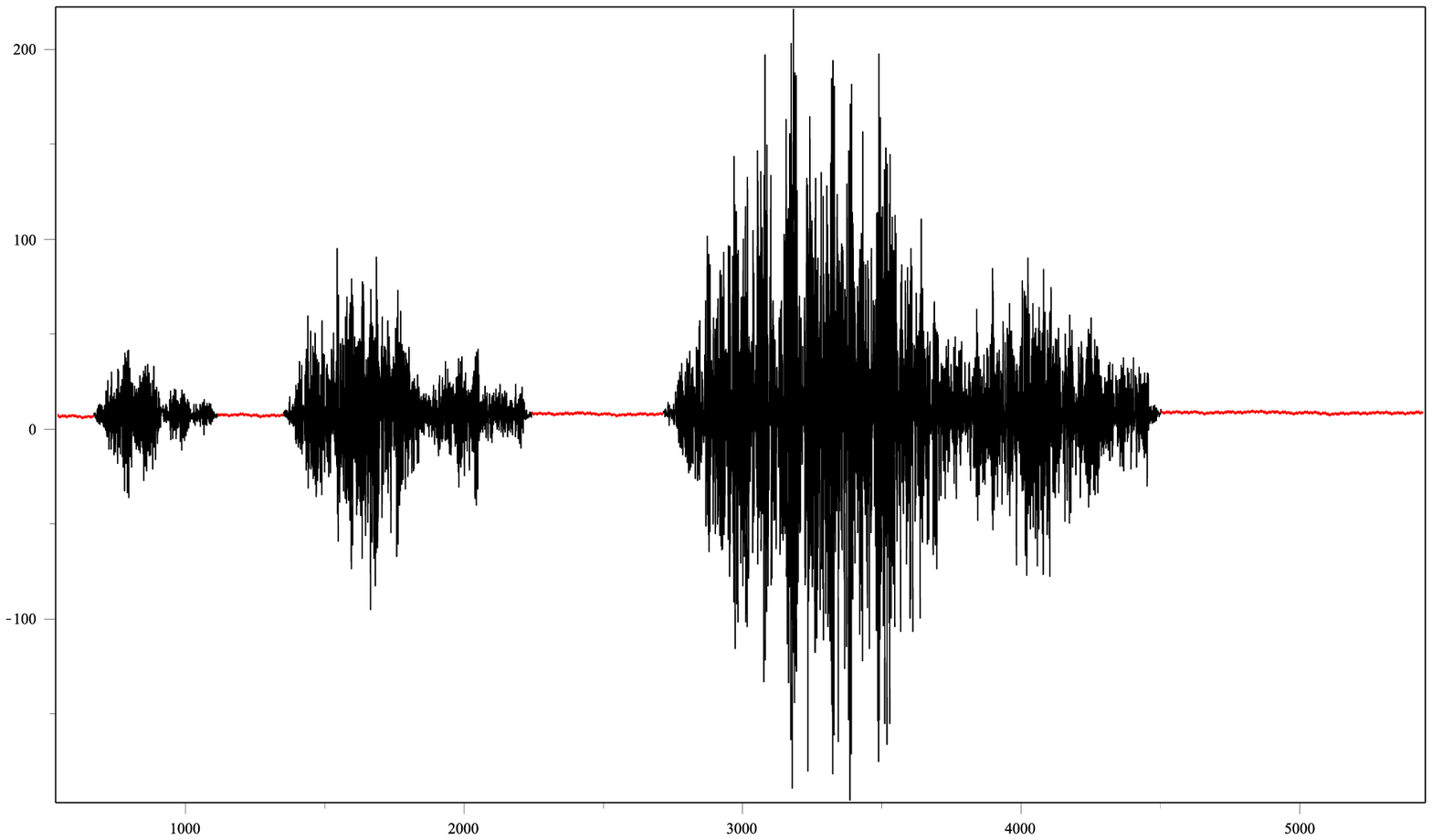}
\caption{A line plot of $S_c(n) = V_c(n) - \frac{n}{2}$ for $R(7) \leq n \leq P(11)$. Red regions are corresponding to slow subsequence of $V_c(n)$, while black regions have unpredictable noise characteristics.}
\label{fig:1122}   % Give a unique label
\end{center}
\end{figure}

\begin{table*}[!]
\caption {The values of $P(n)$ sequence for $n \leq 20$.}
\begin{center}

%\begin{ruledtabular}
\begin{adjustbox}{max width=1\textwidth}
\begin{tabular}{cccccc}

\noalign{\smallskip}
$ $ & $ $ & $ $ & $m$ & $ $ & $ $\\
\noalign{\smallskip}\hline\noalign{\smallskip}
\noalign{\smallskip}
$ $ & $1$ & $2$ & $3$ & $4$ & $5$\\
\noalign{\smallskip}\hline\noalign{\smallskip}
$P(m+0)$ &1	&4	 &17	 &37 &78\\
$P(m+5)$ &162	 &331 &671 &1352&2715\\
$P(m+10)$ &5443 &10900 &21816	&43649	&87316\\
$P(m+15)$ &174652&349325	&698673	&1397370	&2794765\\
\hline
\noalign{\smallskip}
\\
\\
\end{tabular}
\end{adjustbox}
\caption {The values of $R(n)$ sequence for $n \leq 20$.}
\label{tab:table1}
%\end{ruledtabular}
%\end{center}
%\end{table*}
%\begin{table*}[!htbp]
%\begin{center}
%\begin{ruledtabular}

%\end{center}
%\end{table*}

%\begin{table*}[!htb]
%\begin{center}
%\begin{ruledtabular}
\begin{adjustbox}{max width=1\textwidth}
\begin{tabular}{cccccc}
\noalign{\smallskip}
$ $ & $ $ & $ $ & $m$ & $ $ & $ $\\
\noalign{\smallskip}\hline\noalign{\smallskip}
\noalign{\smallskip}
$ $ & $1$ & $2$ & $3$ & $4$ & $5$\\
\noalign{\smallskip}\hline\noalign{\smallskip}
$R(m+0)$ &1	&4	 &18	 &45 &111\\
$R(m+5)$ &257	 &542 &1115 &2242&4501\\
$R(m+10)$ &9029 &18088 &36213	&72462	&144994\\
$R(m+15)$ &290027&580112	&1161200	&2323822	&4650379\\
\hline
\noalign{\smallskip}
\end{tabular}
\end{adjustbox}
\label{tab:tablex}
%\end{ruledtabular}
\end{center}
\end{table*}
\begin{table*}[!h]
\caption {Values of $\alpha(k, S_{c}(n))$ for $5 \leq k \leq 20$.}
\begin{center}
%\begin{ruledtabular}
\begin{adjustbox}{max width=0.85\textwidth}
\begin{tabular}{cc}
\noalign{\smallskip}
$k$ & $\alpha(k, S_{c}(n))$  \\
\noalign{\smallskip}\hline\noalign{\smallskip}

5	&0.8402	  \\
6	&0.7278	  \\
7	&0.7477	  \\
8	&1.4374	   \\
9	&1.0590	  \\
10	&1.1340	  \\
11	&1.1686	  \\
12	&1.1744  \\
13	&1.1077  \\
14	&1.1656	  \\
15	&1.1558	  \\
16   &1.1339 \\
17   &1.1371 \\
18   &1.1336 \\
19   &1.1212 \\
20   &1.1231 \\
\hline
\noalign{\smallskip}
\end{tabular}
\end{adjustbox}

\label{tab:stat3}
%\end{ruledtabular}
\end{center}
\end{table*}

Computational results in Table 3 show that $\alpha$ values oscillate in different range than the sequences which include Hofstadter's $Q$-sequence that  are studied before~\cite{11,8,9,10}. This investigation suggests that $V_{c}(n)$ is going to termination step by step in its successive generational order due to increasing characteristics of noise that $\alpha$ values which are greater than $1$ depict. So experimental evidence suggests that infinite chaotic solution for $V$-recurrence is very difficult to construct although exceptionally long life is possible for some choices of initial conditions sets based on appearance of such generational structure. Furthermore, initial condition patterns which are formulated by asymptotic property of $V$-sequence also confirm mortality of solution sequence if a slow solution does not exist ~\cite{11}. At this case, it is natural to think that if a non-slow infinite solution sequence exists for $V$-recurrence, most likely that solution has quasi-periodic nature ~\cite{13,15}.

\newpage

\section{A new kind of solution}
Given a meta-Fibonacci recurrence, there is a known algorithm to search for solutions to it that satisfy a linear recurrence relation~\cite{14}. This algorithm finds infinite families of solutions that eventually consist of interleavings of simple (typically constant or linear) subsequences. For the $V$-recurrence, the algorithm finds $20$ solution families eventually consisting of interleavings of five constant or linear sequences. Since solutions to the $V$-recurrence are invariant under shifting all of the terms~\cite{12}, this corresponds to four fundamentally different families. The initial conditions in Table~\ref{tab:icv} each generate a representative of one of these families. (Despite having the same constant-linear pattern, the terms in the last two families have different congruences mod $5$. They are therefore distinct families.)

\begin{table}
\begin{tabular}{|c|c|}\hline
\textbf{Pattern} & \textbf{Initial Condition}\\\hline
C,C,C,L,L & \[ \ic{5, 4, 0, 0, 0, 5, 0, 5, 5, 1, 5, 4}  \]\\\hline
C,C,L,C,L & \[ \ic{4, 0, 5, -2, 1, 3, -3, 5, 3, 0, 4, 10, 5, 8}  \]\\\hline
C,L,C,L,L & \[ \ic{0, 14, -4, -7, 8, 5, 14, -2, -2, 8, 0, 0, 6, 3, 18, 15, 14, 11, 8, 8, 20, 14, 16, 13, 8, 25}  \]\\\hline
C,L,C,L,L & \[ \ic{0, 2, -2, -6, 11, 6, 2, 3, 0, 11, 0, 2, 8, -2, 11, 15, 2, 13}  \]\\\hline
\end{tabular}
\caption{Patterns and representative initial conditions for each of the four families of period-$5$ solutions to the $V$-recurrence. (C=constant, L=linear)}
\label{tab:icv}
\end{table}

In this section, we describe another infinite family of solutions to the $V$-recurrence. Like the families in Table~\ref{tab:icv}, its members eventually consist of five relatively simple interleaved sequences. But, as we shall see, not all of them are constant or linear. Then, we describe a related family of solutions to a different recurrence.

\subsection{A System of Nested Recurrences with Slow Solutions}

As an aside, we first discuss the behavior of solutions to a certain type of system of nested recurrences.
\begin{definition}
For integers $c_f$, $d_f$, $c_g$, and $d_g$ with $d_f+d_g>0$, the \emph{Golomb-like system} with those parameters is the system
\[
\begin{cases}
f\p{n}=g\p{n-g\p{n-1}-c_f}+d_f\\
g\p{n}=f\p{n-f\p{n}-c_g}+d_g.
\end{cases}
\]
\end{definition}
\noindent
The name \emph{Golomb-like} stems from the observation that the recurrences in these systems bear a superficial resemblance to Golomb's~\cite{16} recurrence $G\p{n}=G\p{n-G\p{n-1}}+1$. Also, solutions to Golomb-like systems appear to behave similarly to solutions to Golomb's recurrence. In particular, Golomb-like systems have some slow solutions with simple descriptions, which we will see shortly. They also appear to have many non-slow solutions with noticeable patterns. Golomb's recurrence exhibits similar behavior, and it is conjectured~\cite{17} that all solutions to Golomb's recurrence grow asymptotically like $\sqrt{2n}$. We have a similar conjecture about Golomb-like systems:
\begin{con}
Any solution to the Golomb-like system with parameters $c_f$, $d_f$, $c_g$, and $d_g$ grows asymptotically like $\sqrt{\pb{d_f+d_g}n}$.
\end{con}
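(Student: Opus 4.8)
The plan is to carry this out in three stages: first fix the constant by a formal asymptotic expansion, then convert the expansion into rigorous two‑sided bounds by a bootstrap induction, and finally try to tighten those bounds to the claimed sharp asymptotic. For the first stage I would write $c=\sqrt{d_f+d_g}$ and posit $f(n)=c\sqrt{n}+\gamma_f+o(1)$ and $g(n)=c\sqrt{n}+\gamma_g+o(1)$ for unknown constants $\gamma_f,\gamma_g$. Then $g(n-1)=c\sqrt{n}+\gamma_g+o(1)$, so the argument of $g$ in the first recurrence is $n-g(n-1)-c_f=n-c\sqrt{n}+O(1)$, and a Taylor expansion of the square root gives $g\!\left(n-g(n-1)-c_f\right)=c\sqrt{n}-\frac{c^2}{2}+\gamma_g+o(1)$; the symmetric computation handles the $g$‑recurrence. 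Matching the constant terms in the two recurrences would give
\[
\gamma_f=\gamma_g-\frac{c^2}{2}+d_f,\qquad \gamma_g=\gamma_f-\frac{c^2}{2}+d_g,
\]
and adding these eliminates $\gamma_f$ and $\gamma_g$, forcing $c^2=d_f+d_g$; the hypothesis $d_f+d_g>0$ is exactly what makes $c$ a positive real. Already this stage yields a conditional result: any solution for which $f(n)/\sqrt{n}$ and $g(n)/\sqrt{n}$ are known to converge must converge to $\sqrt{d_f+d_g}$.

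For the second stage I would first record the structural facts that make the recurrences usable: that the solution does not die, that $f(n),g(n)\to\infty$, that the arguments $n-g(n-1)-c_f$ and $n-f(n)-c_g$ are eventually positive, nondecreasing, and unbounded, and that the consecutive differences of $f$ and $g$ are eventually nonnegative and bounded. For the explicit slow solutions mentioned above all of this — and indeed the full asymptotic — can be checked directly from the closed form by a routine induction, so the substance of the conjecture is the case of an arbitrary solution. I would then run a bootstrap: assuming $a\sqrt{m}\le f(m),g(m)\le b\sqrt{m}$ for all $m<n$, substitute into the recurrences and use the square‑root expansion above to recover the same inequalities at $n$, starting from crude constants $0<a<c<b$ and, at each pass, replacing $(a,b)$ by improved constants $(a',b')$ determined by the fixed‑point relations isolated in the first stage. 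If the induced map $(a,b)\mapsto(a',b')$ contracts toward $(c,c)$, the bounds would converge and give $f(n)/\sqrt{n},\,g(n)/\sqrt{n}\to c$.

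The hard part will be this last stage, and it is exactly the obstacle that remains open for Golomb's own recurrence $G(n)=G(n-G(n-1))+1$: controlling the fluctuations. The first recurrence evaluates $g$ at the index $n-g(n-1)-c_f$, which itself carries the full error of $g$, so the error of $f(n)$ is governed not by $g$ near a fixed point but by $g$ over a window whose location fluctuates — a feedback that can amplify noise rather than damp it, the same mechanism behind the growing $\alpha$ statistics for $V_c$ discussed above. Turning the bootstrap into a genuine contraction therefore seems to require an a priori bound showing that $f(n)-c\sqrt{n}$ and $g(n)-c\sqrt{n}$ oscillate only by $o(\sqrt{n})$, and it is this oscillation control — not the value of the constant — that I do not expect to be attainable by elementary means; hence the statement is offered as a conjecture. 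A realistic intermediate target is the weaker assertion that every solution satisfies $f(n),g(n)=\Theta(\sqrt{n})$, which should already follow from the crude stage of the bootstrap together with the structural facts above.
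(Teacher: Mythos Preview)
The paper does not prove this statement: it is explicitly labelled a \emph{Conjecture}, and the only justification offered is a single sentence citing experimentation together with the analogy to Golomb's recurrence. There is no argument in the paper to compare your proposal against.

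Your write-up is therefore not a competing proof but an analytic heuristic that goes well beyond what the paper supplies. The formal expansion in your first stage is a clean way to pin down the constant $\sqrt{d_f+d_g}$ and is worth keeping as motivation; the paper gives no such derivation. You are also right to flag that the genuine obstruction is oscillation control in the bootstrap, and that this mirrors the open status of the analogous question for Golomb's recurrence. Since you yourself conclude that the sharp asymptotic is not attainable by these means and that the statement should remain a conjecture, your proposal is consistent with the paper's stance --- it simply provides more supporting intuition than the paper does. If you intend this as a proof, it is not one: the structural facts you list in stage two (non-death, monotone unbounded arguments, bounded differences) are assumed rather than established for an arbitrary solution, and without them the bootstrap cannot even begin.
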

\noindent
The evidence for this conjecture comes from experimentation combined with the similar behavior to Golomb's recurrence. In particular, these solutions all appear to be sub-linear.

%We typically begin our sequences derived from nested recurrences with index $1$, and will continue to do so in the examples immediately below. But, in the applications to the $V$-recurrence, it will be simpler if we allow solutions to Golomb-like systems to start from any fixed index $n_0$. Typically, $n_0$ will be a negative number. This is just a matter of convenience, as solutions to Golomb-like systems are invariant under shifting all of the terms~\cite{12}.

\subsubsection{Specific Solutions to Golomb-like Systems}
We now examine a few specific solutions to some Golomb-like systems. All of these solutions are slow and easy to describe. They all appear again in connections with the $V$-recurrence.
\begin{pro}\label{prop:fg1}
The Golomb-like system
\[
\begin{cases}
f\p{n}=g\p{n-g\p{n-1}}\\
g\p{n}=f\p{n-f\p{n}}+1
\end{cases}
\]
given initial condition $f\p{1}=0$ generates a slow solution where each nonnegative integer $i$ appears in the $f$-sequence $2i+1$ times and in the $g$-sequence $2i$ times.
\end{pro}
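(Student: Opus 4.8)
The plan is to strengthen the statement to two explicit closed forms and then verify them by a single strong induction on $n$. Concretely, I would show that $f\p{n}$ is the integer $i$ with $i^{2}<n\le\p{i+1}^{2}$, and that $g\p{n}$ is the integer nearest $\sqrt{n}$, i.e.\ the integer $j$ with $j^{2}-j+1\le n\le j^{2}+j$. These formulas immediately imply that the solution is slow (each of $f,g$ is nondecreasing with successive differences in $\bk{0,1}$), that the value $i$ occupies exactly the $2i+1$ positions in $\bk{i^{2}+1,\p{i+1}^{2}}$ within $f$ and the value $j$ occupies exactly the $2j$ positions in $\bk{j^{2}-j+1,j^{2}+j}$ within $g$ (so $0$ occurs once in $f$ and not at all in $g$), and — since every argument invoked below lies in $\bk{1,n}$ — that the solution never dies. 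I would also note at the outset that the recursion is well-defined by induction on $n$: $f\p{n}$ depends on $g$ at $n-1$ and $n-g\p{n-1}$, both $<n$ because $g$ is positive, while $g\p{n}$ depends on $f\p{n}$ itself and on $f$ at $n-f\p{n}<n$ for $n\ge2$; the case $n=1$ is checked directly ($f\p{1}=0$, $g\p{1}=f\p{1-0}+1=1$).

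The inductive step at $n\ge2$ splits into computing $f\p{n}$ and then $g\p{n}$. For $f\p{n}$, let $i\ge1$ be the index with $i^{2}+1\le n\le\p{i+1}^{2}$; the hypothesis for $g$ forces $g\p{n-1}=i$ when $i^{2}+1\le n\le i^{2}+i+1$ and $g\p{n-1}=i+1$ when $i^{2}+i+2\le n\le\p{i+1}^{2}$. In the first case $f\p{n}=g\p{n-i}$ with $n-i\in\bk{i^{2}-i+1,\,i^{2}+1}$; in the second $f\p{n}=g\p{n-i-1}$ with $n-i-1\in\bk{i^{2}+1,\,i^{2}+i}$; since both of these index ranges sit inside $\bk{i^{2}-i+1,\,i^{2}+i}$, where $g$ equals $i$, we get $f\p{n}=i$. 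For $g\p{n}$, let $j$ be the index with $j^{2}-j+1\le n\le j^{2}+j$; the value of $f\p{n}$ just obtained is $j-1$ if $n\le j^{2}$ and $j$ if $n\ge j^{2}+1$, and in either case $n-f\p{n}$ falls in $\bk{\p{j-1}^{2}+1,\,j^{2}}$, where $f$ equals $j-1$, so $g\p{n}=\p{j-1}+1=j$.

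The conceptual content is slight; the real work is the bookkeeping in the inductive step — deciding which block each of the shifted indices $n-1$, $n-g\p{n-1}$, and $n-f\p{n}$ lands in and checking that the sub-cases tile each block exactly, with the expected care at block boundaries and for the smallest values of $n$. That case analysis is the only obstacle, and it is routine.
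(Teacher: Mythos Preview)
Your proposal is correct and essentially identical to the paper's proof: both identify the explicit block descriptions $f=i$ on $\bk{i^{2}+1,(i+1)^{2}}$ and $g=j$ on $\bk{j^{2}-j+1,j^{2}+j}$, then verify them by strong induction on $n$, computing $f\p{n}$ from $g$-values below $n$ and $g\p{n}$ from $f$-values at most $n$, with the same two-case split in each (depending on whether $n$ lies in the lower or upper part of its block). The only cosmetic difference is that the paper parametrizes the position within a block by an offset $r$ rather than by interval membership, and presents the $g$-verification before the $f$-verification.
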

\begin{proof}
If each nonnegative integer $i$ appears $2i+1$ times in the $f$-sequence, terms $f\p{i^2+1}$ through $f\p{i^2+2i+1}$ must equal $i$. Similarly, if each nonnegative integer $i$ appears $2i$ times in the $g$-sequence, terms $g\p{i^2-i+1}$ through $g\p{i^2+i}$ must equal $i$.

We now proceed by induction on the index. First, we observe that $f\p{1}=0$, as required. 
%, $g\p{0}=f\p{0-f\p{0}}+1=1$, and $f\p{1}=g\p{1-g\p{0}}=1$. So, the $f$-sequence has one $0$, and the $g$-sequence has no $0$'s, as required. 
%We can compute additional terms of both sequences to observe that the $g$-sequence begins with exactly two $1$'s.
%
%Now, suppose $i\geq1$, and suppose that each nonnegative integer less than $i$ appears the appropriate number of times in the $f$-sequence, and suppose each nonnegative integer less than $i+2$ appears the appropriate number of times in the $g$-sequence.  We now examine occurrences of $i$ in the $f$-sequence and $i+2$ in the $g$-sequence.
%
We now examine each sequence, starting with the $g$-sequence. Suppose $n$ is a positive integer, and suppose that, for all $m\leq n$, $f\p{m}$ equals its desired value. 
%First, we wish to show that all terms of the form $f\p{i^2+1+r}$ equal $i$, where $0\leq r<2i+1$. 
We can write $n=i^2-i+1+r$ for some $i\geq1$ and $0\leq r<2i$. Wishing to show $g\p{n}=i$, we have
\begin{align*}
g\p{n}&=g\p{i^2-i+1+r}\\
&=f\p{i^2-i+1+r-f\p{i^2-i+1+r}}+1\\
&=f\p{i^2-i+1+r-f\p{i^2+1+\pb{r-i}}}+1.
\end{align*}
We now have two cases to consider:
\begin{description}
\item[$r<i$:] If $r<i$, then $r-i<0$, meaning that $f\p{i^2+1+\pb{r-i}}=i-1$ by induction. We then have that
\begin{align*}
g\p{n}&=f\p{i^2-i+1+r-\pb{i-1}}\\
&=f\p{i^2-2i+2+r}+1\\
&=f\p{\pb{i-1}^2+1+r}+1.
\end{align*}
Since $0\leq r<i<2\pb{i-1}+1$, we have that $f\p{\pb{i-1}^2+1+r}=i-1$, meaning $g\p{n}=i$, as required.
%%%%%%
\item[$r\geq i$:] If $r\geq i$, then $r-i\geq0$, meaning that $f\p{i^2+1+\pb{r-i}}=i$ by induction. We then have that
\begin{align*}
g\p{n}&=f\p{i^2-i+1+r-i}\\
&=f\p{i^2-2i+1+r}+1\\
&=f\p{\pb{i-1}^2+1+\pb{r-1}}+1.
\end{align*}
Since $i-1\leq r-1<i<2\pb{i-1}+1$, we have that $f\p{\pb{i-1}^2+1+\pb{r-1}}=i-1$, meaning $g\p{n}=i$, as required.
\end{description}

Now, we examine the $f$-sequence. Suppose $n\geq2$ is an integer, and suppose that, for all $m<n$, $g\p{m}$ equals its desired value. 
%First, we wish to show that all terms of the form $f\p{i^2+1+r}$ equal $i$, where $0\leq r<2i+1$. 
We can write $n=i^2+1+r$ for some $i\geq1$ and $0\leq r<2i+1$. Wishing to show $f\p{n}=i$, we have
\begin{align*}
f\p{n}&=f\p{i^2+1+r}\\
&=g\p{i^2+1+r-g\p{i^2+1+r}}\\
&=g\p{i^2+1+r-g\p{i^2+i+1+\pb{r-i-1}}}\\
&=g\p{i^2+1+r-g\p{\pb{i+1}^2-\pb{i+1}+1+\pb{r-i-1}}}.
\end{align*}
We now have two cases to consider:
\begin{description}
\item[$r\leq i$:] If $r\leq i$, then $r-i-1<0$, meaning that $g\p{\pb{i+1}^2-\pb{i+1}+1+\pb{r-i-1}}=i$ by induction. We then have that
\begin{align*}
f\p{n}&=g\p{i^2+1+r-i}\\
&=g\p{i^2-i+1+r}.
\end{align*}
Since $0\leq r\leq i<2i$, we have that $g\p{i^2-i+1+r}=i$, meaning $f\p{n}=i$, as required.
%%%%%%
\item[$r>i$:] If $r> i$, then $r-i-1\geq0$, meaning that $g\p{\pb{i+1}^2-\pb{i+1}+1+\pb{r-i-1}}=i+1$ by induction. We then have that
\begin{align*}
f\p{n}&=g\p{i^2+1+r-\pb{i+1}}\\
&=g\p{i^2-i+1+\pb{r-1}}.
\end{align*}
Since $i\leq r-1<2i$, we have that $g\p{i^2-i+1+\pb{r-1}}=i$, meaning $f\p{n}=i$, as required.
\end{description}
\end{proof}

\begin{pro}\label{prop:fg2}
The Golomb-like system
\[
\begin{cases}
f\p{n}=g\p{n-g\p{n-1}}\\
g\p{n}=f\p{n-f\p{n}}+2
\end{cases}
\]
given initial conditions $f\p{1}=0$, $f\p{2}=1$, $f\p{3}=1$, $g\p{1}=1$, $g\p{2}=1$, and $g\p{3}=2$ generates a slow solution where:
%each odd integer $i$ appears in the $f$-sequence $2i+1$ times and in the $g$-sequence $2i$ times.
\begin{itemize}
\item Each odd integer $i\geq3$ appears in the $f$-sequence $2i+1$ times and the $g$-sequence $2i-1$ times.
\item Each even positive integer appears in each sequence once.
\item The $f$-sequence starts with $0$, this being the only appearance of $0$ in either sequence.
\item The number $1$ appears exactly $4$ times in the $f$-sequence and exactly twice in the $g$-sequence.
\end{itemize}
\end{pro}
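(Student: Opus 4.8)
The plan is to mirror the proof of Proposition~\ref{prop:fg1}: translate the four bullet points into an explicit description of the maximal index intervals on which $f$ and $g$ are constant, and then verify that description by a single strong induction on the index, evaluating the two recurrences block by block.

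First I would record the block boundaries forced by the stated multiplicities. In the $f$-sequence the value $0$ occupies the single index $1$, the value $1$ occupies indices $2,3,4,5$, each even $i\ge 2$ occupies the single index $\tfrac{i^2+2i+4}{2}$, and each odd $i\ge 3$ occupies the indices from $\tfrac{i^2+5}{2}$ to $\tfrac{i^2+4i+5}{2}$. In the $g$-sequence the value $1$ occupies indices $1,2$, each even $i\ge 2$ occupies the single index $\tfrac{i^2+2}{2}$, and each odd $i\ge 3$ occupies the indices from $\tfrac{i^2-2i+5}{2}$ to $\tfrac{i^2+2i+1}{2}$. A routine check shows that in each sequence consecutive value-blocks are adjacent and together exhaust $\mathbb{Z}_{\ge 1}$ (for instance, in $f$ the block of value $i-1$ ends exactly one index before the block of value $i$ begins), and that the block lengths are the claimed $2i+1$, $2i-1$, etc. Once this description is proved, both sequences are automatically weakly increasing and attain every value in their range, so the solution is slow; in particular $n-g(n-1)$ and $n-f(n)$ always lie in $[1,n]$, so the system is well defined throughout.

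Then I would run the induction as in Proposition~\ref{prop:fg1}, alternating between the sequences: to evaluate $g(n)=f(n-f(n))+2$ I assume $f(m)$ has its claimed value for all $m\le n$, and to evaluate $f(n)=g(n-g(n-1))$ I assume $g(m)$ has its claimed value for all $m<n$; these two statements feed each other and bootstrap from the given values $f(1)=0$, $f(2)=f(3)=1$, $g(1)=g(2)=1$, $g(3)=2$. The finitely many small indices at which the $2i\pm1$ counts do not yet apply (up to roughly $n=7$) are checked by hand. For a generic $n$, I locate the value-block containing $n$ in the relevant layout (this pins down the target value), substitute the inductively known value of $f(n)$ or $g(n-1)$, and verify that the shifted index $n-f(n)$ or $n-g(n-1)$ lands in the block predicted by the target value. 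Since the offset drags $n$ across block boundaries, each of the two verifications splits into a handful of cases: $n$ in an even singleton, $n$ in the lower part of an odd block, and $n$ in the upper part of an odd block (where, for instance, $n-f(n)$ leaves the block of value $i-2$, passes through the singleton of value $i-1$, and enters the block of value $i$, so that $g(n)$ runs through $i,i+1,i+2$ across that block). In every case the claim reduces to one polynomial identity together with a pair of polynomial inequalities in $i$, all routine to check and valid for $i$ above a small bound.

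I expect the obstacle to be organizational rather than conceptual. There are two interleaved sequences, each assembled from two kinds of blocks (even singletons and long odd blocks), and the recurrences convert a position in one layout into a shifted position in the other, so the case count multiplies and the quadratic boundary expressions $\tfrac{i^2+\cdots}{2}$ must be manipulated without off-by-one slips. The irregular behavior of the small values $0,1,2$ — where the ``$2i+1$'' and ``$2i-1$'' multiplicities fail — is the second source of friction: it enlarges the block of base cases and must be watched wherever a shifted index happens to be small. Neither issue is deep, but getting the boundary arithmetic exactly right is where the real work lies.
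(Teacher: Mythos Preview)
Your proposal is correct and is essentially the approach the paper has in mind: the paper omits the proof, saying only that it is similar to Proposition~\ref{prop:fg1} ``with the added complication of keeping track of even versus odd,'' that the odd values are handled exactly as in Proposition~\ref{prop:fg1}, and that each even value in one sequence is produced from the preceding even value in the other. Your explicit block boundaries, the alternating strong induction, and the case split into even singletons versus lower/upper halves of odd blocks realize precisely this sketch; the parenthetical about $n-f(n)$ traversing the $(i-2)$-block, the $(i-1)$-singleton, and the $i$-block is exactly the paper's remark about even terms, seen from inside the case analysis.
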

Proposition~\ref{prop:fg2} has a similar proof to Proposition~\ref{prop:fg1}, with the added complication of keeping track of even versus odd. The odd terms are generated similarly to all the terms in the proof of Proposition~\ref{prop:fg1}, and each even term in one sequence comes from the preceding even term in the other sequence. For brevity, the proof of Proposition~\ref{prop:fg2} is omitted.

\begin{pro}\label{prop:fg12}
The Golomb-like system
\[
\begin{cases}
f\p{n}=g\p{n-g\p{n-1}}+1\\
g\p{n}=f\p{n-f\p{n}}+2
\end{cases}
\]
given initial conditions $f\p{1}=0$, $f\p{2}=1$, $f\p{3}=1$, $g\p{1}=1$, $g\p{2}=1$, and $g\p{3}=2$ generates a slow solution where:
%each odd integer $i$ appears in the $f$-sequence $2i+1$ times and in the $g$-sequence $2i$ times.
\begin{itemize}
\item If $i\geq3$ is a multiple of $3$, $i$ appears once in the $f$-sequence and $2i-2$ times in the $g$-sequence.
\item If $i\geq4$ is congruent to $1$ mod $3$, $i$ appears $2i-1$ times in the $f$-sequence and twice in the $g$-sequence.
\item If $i$ is a positive integer congruent to $2$ mod $3$, $i$ appears twice in the $f$-sequence and once in the $g$-sequence.
\item The $f$-sequence starts with $0$, this being the only appearance of $0$ in either sequence.
\item The number $1$ appears exactly twice in each sequence.
\end{itemize}
\end{pro}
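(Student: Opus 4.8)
The plan is to follow the template of the proof of Proposition~\ref{prop:fg1}, upgrading its single quadratic block structure to one that is piecewise according to the residue mod~$3$. First I would compute $f(n)$ and $g(n)$ directly from the recurrences and the given initial conditions for small $n$ (far enough to cover the irregular values $0,1,2$ and to reach the start of the periodic regime), and check agreement with the stated multiplicities. Then I would translate the multiplicity claims into closed-form block boundaries: let $\phi_f(i)$ (resp.\ $\phi_g(i)$) be the index at which the value $i$ first appears. A routine summation of the claimed multiplicities yields, for $k\geq 1$,
\begin{align*}
\phi_f(3k) &= 3k^2+k+2, & \phi_f(3k+1) &= 3k^2+k+3, & \phi_f(3k+2) &= 3k^2+7k+4,\\
\phi_g(3k) &= 3k^2-2k+3, & \phi_g(3k+1) &= 3k^2+4k+1, & \phi_g(3k+2) &= 3k^2+4k+3,
\end{align*}
together with the small values $0$ and $1$, which are handled by the explicit computation above. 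The statement to be proved by induction is then: $f(n)=i$ exactly when $\phi_f(i)\leq n<\phi_f(i+1)$, and $g(n)=i$ exactly when $\phi_g(i)\leq n<\phi_g(i+1)$.

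The induction is a simultaneous one on $n$, carried out in the order dictated by the dependency structure of the system: computing $g(n)$ requires $f(n)$, and computing $f(n)$ requires $g(n-1)$, so the natural order is $\dots, g(n-1), f(n), g(n), f(n+1), \dots$. For the $f$-step, write $n$ in its block so that the target is $f(n)=i$; using the inductive hypothesis to evaluate $g(n-1)$, compute the argument $n-g(n-1)$, locate it among the intervals $[\phi_g(j),\phi_g(j+1))$, and read off $g(n-g(n-1))$, which must equal $i-1$. For the $g$-step, write $n$ in its block with target $g(n)=i$; since $f(n)=i$ is already established, compute $n-f(n)$, locate it among the intervals $[\phi_f(j),\phi_f(j+1))$, and verify $f(n-f(n))=i-2$. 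In both steps the computation splits into cases according to the residue of $i$ mod $3$ and according to whether $n$ lies at the left end, in the interior, or at the right end of its block. Conceptually, the \emph{large} blocks (value $\equiv 1 \bmod 3$ in $f$, value $\equiv 0 \bmod 3$ in $g$) are promoted to one another by the recurrences, while within each period the two short blocks are each pushed forward one step; making this precise is exactly the content of the case analysis, and it is the direct analogue of the single uniform case in Proposition~\ref{prop:fg1}. Along the way one also records that the arguments $n-g(n-1)$ and $n-f(n)$ remain within the range of already-defined indices, which follows from the boundary formulas.

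The main obstacle is the bookkeeping forced by the huge disparity in block lengths: the value $3k+1$ occupies an $f$-block of length $6k+1$, whereas the neighbouring values $3k+2$ and $3(k+1)$ occupy $f$-blocks of length $2$ and $1$ (and symmetrically for $g$). Consequently, as $n$ ranges over a large block, the argument $n-f(n)$ (or $n-g(n-1)$) sweeps across a contiguous stretch of the other sequence that straddles a short block, a long block, and parts of two more blocks, and one must verify that this pointer advances in lockstep with $n$ and always lands in the block carrying the intended value; the edge cases, where the pointer crosses from one short block to the next or enters or exits a long block, are the delicate ones. A secondary nuisance is the seam between the irregular initial segment (the values $0,1,2$ with their anomalous multiplicities) and the periodic regime, where the first few applications of the inductive step must be checked against the explicitly computed initial values rather than the generic formulas for $\phi_f$ and $\phi_g$. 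Once these boundary verifications are in place, the interior of each case reduces to a routine substitution, just as in Proposition~\ref{prop:fg1}.
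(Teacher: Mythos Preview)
Your proposal is correct and follows exactly the approach the paper intends: the paper does not write out a proof at all, stating only that ``the proof is similar [to Proposition~\ref{prop:fg1}] and is omitted for brevity.'' Your plan---deriving explicit block-boundary formulas $\phi_f,\phi_g$ from the claimed multiplicities, running a simultaneous induction in the dependency order $g(n-1),f(n),g(n),\ldots$, and splitting into cases by residue mod~$3$ and by position within a block---is precisely the natural elaboration of the Proposition~\ref{prop:fg1} template, and your boundary formulas check out numerically.
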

Again, the proof is similar and is omitted for brevity.

\subsection{An Infinite Family of Solutions to the $V$-recurrence}

We are now able to describe an infinite family of solutions to the $V$-recurrence that consist of interleavings of five simpler sequences. These solutions are of a similar flavor to those in~\cite{14}. But, the methods of that paper would not find these solutions, as these include subsequences that are $\Theta\p{\sqrt{n}}$ in growth.
\begin{thm}\label{thm:infv}
Let $K$, $b_0$, $b_1$, $b_2$, $b_4$, $a_f$, $a_g$, and $m$ be integers satisfying the following properties:
%\begin{multicols}{2}
\begin{itemize}
\item $b_0\equiv1\pmod{5}$
\item $b_1\equiv4\pmod{5}$
\item $b_2\equiv2\pmod{5}$ and $7\leq b_2<K+3$
\item $b_4\equiv3\pmod{5}$ and $8\leq b_4<K+5$
\item $a_f\equiv 2\pmod{5}$
\item $a_g\equiv 3\pmod{5}$
\item $a_f+a_g>0$
\item $m\geq1$.
\end{itemize}
%\end{multicols}
\noindent
Define the following Golomb-like system:
\[
\begin{cases}
f\p{n}=g\!\pb{n-g\p{n-1}-\frac{b_1+1}{5}}+\frac{b_1-b_0+a_f}{5}\\
g\p{n}=f\!\pb{n-f\p{n}-\frac{b_0-1}{5}}+\frac{b_0-b_1+a_g}{5}.
\end{cases}
\]
%where the first terms in the sequences defined by these recurrences are at index $n_0=-\fl{\frac{K-1}{5}}$. 
Then, there is a solution $V_G$ to the $V$-recurrence that, starting at index $K$, has the form
\[
\begin{cases}
V_G\p{K+5k}=5f\p{k}+b_0\\
V_G\p{K+5k+1}=5g\p{k}+b_1\\
V_G\p{K+5k+2}=5k+b_2\\
V_G\p{K+5k+3}=5m\\
V_G\p{K+5k+4}=5k+b_4
\end{cases}
\]
with any initial condition satisfying the following properties:
\begin{enumerate}
\item\label{it:af} $V_G\p{K+5-b_4}=a_f$
\item\label{it:ag} $V_G\p{K+6-b_2}=a_g$
\item\label{it:5m} $V_G\p{K+3-b_2}+V_G\p{K+8-b_4}=5m$
\item\label{it:b2} For each integer $1\leq i\leq m$, $V_G\p{K+2-5i}=b_2-5i$
%\item\label{it:b3} For each integer $1\leq i\leq m$, $V_G\p{K+3-5i}=5m$
\item\label{it:b3} $V_G\p{K-2}=5m$
\item\label{it:b4} For each integer $1\leq i\leq m$, $V_G\p{K+4-5i}=b_4-5i$
%\item Let $p_1,p_2,\ldots,p_t$ denote the sequence of terms in the initial condition at indices congruent to $K$ mod $5$, and let $q_1,q_2,\ldots,q_t$ denote the sequence of terms in the initial condition at indices congruent to $K+1$ mod $5$ (excluding $V_G\p{1}$ if relevant). The sequences $\frac{p_1-b_0}{5},\frac{p_2-b_0}{5},\ldots,\frac{p_t-b_0}{5}$ and $\frac{q_1-b_1}{5},\frac{q_2-b_1}{5},\ldots,\frac{q_t-b_1}{5}$, when fed to the system for $f$ and $g$
\item\label{it:ic} Let $n_0=\fl{\frac{K-1}{5}}$. The initial conditions
\[
\ic{\frac{V_G\p{K-5n_0}-b_0}{5}, \frac{V_G\p{K-5\pb{n_0-1}}-b_0}{5}, \frac{V_G\p{K-5\pb{n_0-2}}-b_0}{5}, \ldots, \frac{V_G\p{K-5}-b_0}{5}}
\]
for the $f$-sequence and
\[
\ic{\frac{V_G\p{K-5n_0+1}-b_1}{5}, \frac{V_G\p{K-5\pb{n_0-1}+1}-b_1}{5}, \frac{V_G\p{K-5\pb{n_0-2}+1}-b_1}{5}, \ldots, \frac{V_G\p{K-4}-b_1}{5}}
\]
for the $g$-sequence generate sublinear sequences where, for all $n>n_0$, $f\p{n}\leq n+\frac{b_0-1}{5}$, $g\p{n-1}\leq n+\frac{b_1+1}{5}$, and $g\p{n}\leq n+\frac{b_1+1}{5}$. (Note that not all terms in these initial conditions need to be integers, but, in order for the sequences to live, no term after the initial condition can refer to a non-integer term.)
\end{enumerate}
\end{thm}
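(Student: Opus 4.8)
The plan is to verify directly, by induction on the index $n$, that the sequence $V_G$ defined by the five interleaved formulas does satisfy the $V$-recurrence wherever the formulas apply, and that the initial-condition requirements (1)--(7) make everything consistent at the boundary. First I would record, for each residue class $j \in \{0,1,2,3,4\}$, what the mother spot $n - V_G(n-1)$ and the father spot $n - V_G(n-4)$ work out to in terms of $f$, $g$, $k$, $b_0, b_1, b_2, b_4$, and $m$. For the three ``easy'' rows ($j = 2, 3, 4$), the relevant earlier terms should land on indices of the form $K + 5k' + j'$ with $k' < k$, so that $V_G$ at those spots is again given by the closed form, and the recurrence should collapse to a linear identity in $k$ together with a congruence condition mod $5$ — this is exactly where the hypotheses $b_2 \equiv 2$, $b_4 \equiv 3$, $b_0 \equiv 1$, $b_1 \equiv 4$, $a_f \equiv 2$, $a_g \equiv 3 \pmod 5$, and the definition of $m$ via item~\ref{it:5m}, get used. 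The constant row $V_G(K+5k+3) = 5m$ is the one that must be checked to be genuinely constant: its two parents should reference indices of the shape $K + 2 - 5i$ and $K + 4 - 5i$ for $1 \le i \le m$, which is precisely what items~\ref{it:b2} and~\ref{it:b4} pin down, with the base case handled by items~\ref{it:b3} and~\ref{it:5m}.

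Next I would handle the two ``hard'' rows, $j = 0$ and $j = 1$, where the claim is that $V_G(K+5k) = 5f(k) + b_0$ and $V_G(K+5k+1) = 5g(k) + b_1$ for the $f,g$ solving the displayed Golomb-like system. Here the key computation is to show that, modulo the shift by $K$ and the affine rescaling $x \mapsto 5x + b_j$, the $V$-recurrence for these two rows is literally a re-encoding of the two equations of the Golomb-like system. Concretely, for the $j=0$ row one expects the mother spot $K + 5k - V_G(K+5k-1)$ to simplify — using $V_G(K+5k-1) = V_G(K + 5(k-1) + 4) = 5(k-1) + b_4$ — to an index of the form $K + 5\big(k - (k-1) - \text{something}\big) + 1$, i.e.\ a $j=1$ position whose value is $5g(k') + b_1$, and the father spot $K + 5k - V_G(K + 5k - 4) = K + 5k - V_G(K + 5(k-1)+1) = K + 5k - (5g(k-1)+b_1)$ to land on a position that forces the ``$-g(n-1) - \frac{b_1+1}{5}$'' index shift in the definition of $f$. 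Adding the two contributions and matching the additive constant should reproduce exactly $5f(k) + b_0$, where the constant bookkeeping forces $d_f = \frac{b_1 - b_0 + a_f}{5}$ and the analogous relation for $d_g$. The roles of $a_f$ and $a_g$ enter through items~\ref{it:af} and~\ref{it:ag}, which supply the ``seed'' values $V_G(K + 5 - b_4) = a_f$ and $V_G(K + 6 - b_2) = a_g$ needed the first time a father spot for the $f$- or $g$-row reaches back past index $K$; these are the terms that are allowed to be non-integer-free exceptions, so care is needed that the formula is only asserted from index $K$ onward.

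The main obstacle, and the step I would budget the most space for, is the boundary analysis: showing that every mother and father spot invoked by the five formulas for indices $\ge K$ either (a) is itself $\ge K$ and of one of the five closed forms, or (b) falls into the finite list of hand-specified initial values in items~\ref{it:af}--\ref{it:b4}, and that in case (b) the specified value is exactly what the recurrence needs. This is where one must use the inequalities packaged in item~\ref{it:ic} — namely $f(n) \le n + \frac{b_0-1}{5}$, $g(n-1) \le n + \frac{b_1+1}{5}$, $g(n) \le n + \frac{b_1+1}{5}$ for $n > n_0$ — to guarantee that the relevant spots are nonnegative and do not overshoot, i.e.\ that the translated $f$- and $g$-sequences stay ``slow enough'' that the nested references are legal (the sequence does not die), together with the ranges $7 \le b_2 < K+3$ and $8 \le b_4 < K+5$ to control the small indices. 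I would structure this as a lemma: assuming the initial-condition hypotheses, for every $n \ge K$ the two parent spots of $V_G(n)$ lie in $\{1, \dots, n-1\}$ and their $V_G$-values are given by the stated piecewise formula (reading items~\ref{it:af}--\ref{it:b4} as the formula's values on the exceptional small indices). Once that lemma is in place, the recurrence check for each of the five residue classes is the routine affine/congruence bookkeeping sketched above, and the slowness/sublinearity of $V_G$ follows from that of $f$ and $g$ (Conjecture-free, since item~\ref{it:ic} assumes sublinearity of the chosen $f,g$-solution outright, and Propositions~\ref{prop:fg1}--\ref{prop:fg12} furnish concrete such solutions).
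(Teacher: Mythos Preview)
Your plan is essentially the paper's own proof: induction on the index with five cases according to the residue of $n-K$ modulo $5$, in each case reading off the mother and father spots, and showing that each spot either falls under the inductive hypothesis or under one of the seven initial-condition restrictions. The paper does not isolate the boundary analysis as a separate lemma but otherwise proceeds exactly as you describe.

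One small correction to your sketch of the $j=0$ case: you have the roles of mother and father swapped. The mother spot $K+5k-V_G(K+5k-1)=K+5k-\bigl(5(k-1)+b_4\bigr)=K+5-b_4$ is a \emph{fixed} index, handled by restriction~\ref{it:af} to yield the constant $a_f$; it is the father spot $K+5k-V_G(K+5k-4)=K+5\bigl(k-g(k-1)\bigr)-b_1$ that, being congruent to $1$ modulo $5$ relative to $K$, lands on a $g$-position and supplies the nested ``$-g(n-1)-\frac{b_1+1}{5}$'' shift. (Symmetrically, for $j=1$ the mother spot provides the nested $f$-reference and the father spot is the fixed index $K+6-b_2$ of restriction~\ref{it:ag}.) With this straightened out, your outline matches the paper's argument.
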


\begin{proof}
The proof is by induction on the index, with base case provided by the initial condition. Suppose the parameters and initial conditions satisfy all of the listed conditions, and furthermore suppose that the general form of the solution holds through index $n-1$ for some $n\geq K$. We now have five cases to consider:
\begin{description}
\item[$n-K\equiv 0\pmod{5}$:]
In this case, $n=K+5k$ for some $k\geq0$. We have
\[
V_G\p{K+5k}=V_G\p{K+5k-V_G\p{K+5k-1}}+V_G\p{K+5k-V_G\p{K+5k-4}}.
\]
By induction, $V_G\p{K+5k-1}=5\pb{k-1}+b_4$, as this term either falls in the inductive hypothesis or in restriction~\ref{it:b4} on the initial condition. Similarly, $V_G\p{K+5k-4}=5g\p{k-1}+b_1$, as this term either falls in the inductive hypothesis or in restriction~\ref{it:ic} on the initial condition. So, we have
\begin{align*}
V_G\p{K+5k}&=V_G\p{K+5k-5\pb{k-1}-b_4}+V_G\p{K+5k-5g\p{k-1}-b_1}\\
&=V_G\p{K+5-b_4}+V_G\p{K+5\pb{k-g\p{k-1}}-b_1}.
\end{align*}
We now observe that $V_G\p{K+5-b_4}=a_f$ by restriction~\ref{it:af} on the initial condition. Also, since $b_1\equiv4\pmod{5}$ and since restriction~\ref{it:ic} guarantees $g\p{k-1}\leq k+\frac{b_1+1}{5}$, we have $V_G\p{K+5\pb{k-g\p{k-1}}-b_1}=5g\!\pb{k-g\p{k-1}-\frac{b_1+1}{5}}+b_1$. Putting these together yields
\begin{align*}
V_G\p{K+5k}&=a_f+5g\!\pb{k-g\p{k-1}-\frac{b_1+1}{5}}+b_1\\
&=5g\!\pb{k-g\p{k-1}-\frac{b_1+1}{5}}+\pb{b_1-b_0+a_f}+b_0\\
&=5f\p{k}+b_0,
\end{align*}
as required.
%%%%%%%%%%%%%%%
\item[$n-K\equiv 1\pmod{5}$:]
In this case, $n=K+5k+1$ for some $k\geq0$. We have
\[
V_G\p{K+5k+1}=V_G\p{K+5k+1-V_G\p{K+5k}}+V_G\p{K+5k+1-V_G\p{K+5k-3}}.
\]
By induction, $V_G\p{K+5k-3}=5\pb{k-1}+b_2$, as this term either falls in the inductive hypothesis or in restriction~\ref{it:b2} on the initial condition. Similarly, $V_G\p{K+5k}=5f\p{k}+b_0$, as this term falls in the inductive hypothesis. So, we have
\begin{align*}
V_G\p{K+5k+1}&=V_G\p{K+5k+1-5f\p{k}-b_0}+V_G\p{K+5k+1-5\pb{k-1}-b_2}\\
&=V_G\p{K+5\pb{k-f\p{k}}+1-b_0}+V_G\p{K+6-b_2}.
\end{align*}
We now observe that $V_G\p{K+6-b_2}=a_g$ by restriction~\ref{it:ag} on the initial condition. Also, since $b_0\equiv1\pmod{5}$ and since restriction~\ref{it:ic} guarantees $f\p{k}\leq k+\frac{b_0-1}{5}$, we have $V_G\p{K+5\pb{k-f\p{k}}-b_0}=5f\!\pb{k-f\p{k}-\frac{b_0-1}{5}}+b_0$. Putting these together yields
\begin{align*}
V_G\p{K+5k+1}&=5f\!\pb{k-f\p{k}-\frac{b_0-1}{5}}+b_0+a_g\\
&=5f\!\pb{k-f\p{k}-\frac{b_0-1}{5}}+\pb{b_0-b_1+a_g}+b_1\\
&=5g\p{k}+b_1,
\end{align*}
as required.
%%%%%%%%%%%%%%%
\item[$n-K\equiv 2\pmod{5}$:]
In this case, $n=K+5k+2$ for some $k\geq0$. We have
\[
V_G\p{K+5k+2}=V_G\p{K+5k+2-V_G\p{K+5k+1}}+V_G\p{K+5k+2-V_G\p{K+5k-2}}.
\]
By induction, $V_G\p{K+5k+1}=5g\p{k}+b_1$, as this term falls in the inductive hypothesis. Similarly, $V_G\p{K+5k-2}=5m$, as this term either falls in the inductive hypothesis or in restriction~\ref{it:b3} on the initial condition. So, we have
\begin{align*}
V_G\p{K+5k+2}&=V_G\p{K+5k+2-5g\p{k}-b_1}+V_G\p{K+5k+2-5m}\\
&=V_G\p{K+5\pb{k-g\p{k}}+2-b_1}+V_G\p{K+5k+2-5m}.
\end{align*}
We now observe that $V_G\p{K+5k+2-5m}=5\pb{k-m}+b_2$, as this term falls in the inductive hypothesis or in restriction~\ref{it:b2} on the initial condition. Also, since $b_1\equiv4\pmod{5}$ and since restriction~\ref{it:ic} guarantees $g\p{k}\leq k+\frac{b_1+1}{5}$, we have $V_G\p{K+5\pb{k-g\p{k}}+2-b_1}=5m$. Putting these together yields
\begin{align*}
V_G\p{K+5k+2}&=5m+5\pb{k-m}+b_2=5k+b_2,
\end{align*}
as required.
%%%%%%%%%%%%%%%
\item[$n-K\equiv 3\pmod{5}$:]
In this case, $n=K+5k+3$ for some $k\geq0$. We have
\[
V_G\p{K+5k+3}=V_G\p{K+5k+3-V_G\p{K+5k+2}}+V_G\p{K+5k+3-V_G\p{K+5k-1}}.
\]
By induction, $V_G\p{K+5k+2}=5k+b_2$, as this term falls in the inductive hypothesis. Similarly, $V_G\p{K+5k-1}=5\pb{k-1}+b_4$, as this term either falls in the inductive hypothesis or in restriction~\ref{it:b4} on the initial condition. So, we have
\begin{align*}
V_G\p{K+5k+3}&=V_G\p{K+5k+3-5k-b_2}+V_G\p{K+5k+3-5\pb{k-1}-b_4}\\
&=V_G\p{K+3-b_2}+V_G\p{K+8-b_4}.
\end{align*}
By restriction~\ref{it:5m}, this equals $5m$, as required.
%%%%%%%%%%%%%%%
\item[$n-K\equiv 4\pmod{5}$:]
In this case, $n=K+5k+4$ for some $k\geq0$. We have
\[
V_G\p{K+5k+4}=V_G\p{K+5k+4-V_G\p{K+5k+3}}+V_G\p{K+5k+4-V_G\p{K+5k}}.
\]
By induction, $V_G\p{K+5k}=5f\p{k}+b_0$, as this term falls in the inductive hypothesis. Similarly, $V_G\p{K+5k+3}=5m$, as this term also falls in the inductive hypothesis. So, we have
\begin{align*}
V_G\p{K+5k+4}&=V_G\p{K+5k+4-5m}+V_G\p{K+5k+4-5f\p{k}-b_0}\\
&=V_G\p{K+5k+4-5m}+V_G\p{K+5\pb{k-f\p{k}}+4-b_0}.
\end{align*}
We now observe that $V_G\p{K+5k+4-5m}=5\pb{k-m}+b_4$, as this term falls in the inductive hypothesis or in restriction~\ref{it:b4} on the initial condition. Also, since $b_0\equiv1\pmod{5}$ and since restriction~\ref{it:ic} guarantees $f\p{k}\leq k+\frac{b_0-1}{5}$, we have $V_G\p{K+5\pb{k-f\p{k}}+4-b_0}=5m$. Putting these together yields
\begin{align*}
V_G\p{K+5k+4}&=5\pb{k-m}+b_4+5m=5k+b_4,
\end{align*}
as required.
\end{description}
\end{proof}
\subsubsection{Concrete Examples of Solutions to the $V$-recurence}
Let us now see a couple of concrete solutions to the $V$-recurrence corresponding to specific settings of the parameters in Theorem~\ref{thm:infv}.
\begin{pro}\label{prop:v1}
The initial conditions $\ic{4,2,5,3,1}$ to the Hofstadter $V$-recurrence produce a solution of the following form for $k\geq1$:
\[
\begin{cases}
V_G\p{5k}=5f\p{k}+1\\
V_G\p{5k+1}=5g\p{k}-1\\
V_G\p{5k+2}=5k+2\\
V_G\p{5k+3}=5\\
V_G\p{5k+4}=5k+3,
\end{cases}
\]
where $f$ and $g$ are the sequences in Proposition~\ref{prop:fg1}.
\end{pro}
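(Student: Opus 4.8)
The plan is to recognize Proposition~\ref{prop:v1} as the special case of Theorem~\ref{thm:infv} with $K=5$, $b_0=1$, $b_1=-1$, $b_2=7$, $b_4=8$, $a_f=2$, $a_g=3$, and $m=1$, and then to unwind the bookkeeping. First I would check that these values satisfy every bulleted hypothesis: the congruences $b_0\equiv1$, $b_1\equiv4$, $b_2\equiv2$, $b_4\equiv3$, $a_f\equiv2$, $a_g\equiv3\pmod 5$ all hold, as do the bounds $7\le b_2<K+3=8$ and $8\le b_4<K+5=10$, and $a_f+a_g=5>0$, $m=1\ge1$. Next I would note that with these parameters the fractions appearing in the theorem's Golomb-like system all collapse: $\tfrac{b_1+1}{5}=0$, $\tfrac{b_1-b_0+a_f}{5}=0$, $\tfrac{b_0-1}{5}=0$, and $\tfrac{b_0-b_1+a_g}{5}=1$, so that system becomes exactly the one in Proposition~\ref{prop:fg1}. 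Hence the auxiliary sequences $f$ and $g$ it produces are, up to the re-indexing discussed below, the ones described there, and in particular they are slow.

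Second, I would verify that $\ic{4,2,5,3,1}$ meets restrictions~\ref{it:af}--\ref{it:ic}. Since $K=5$ and $m=1$, every index named in~\ref{it:af}--\ref{it:b4} lies in $\{1,2,3,4,5\}$, and each required identity is a one-line check against $V_G(1)=4$, $V_G(2)=2$, $V_G(3)=5$, $V_G(4)=3$, $V_G(5)=1$ (for instance \ref{it:af} is $V_G(2)=a_f=2$, \ref{it:5m} is $V_G(1)+V_G(5)=5m=5$, and \ref{it:b3} is $V_G(3)=5m=5$). For~\ref{it:ic} one has $n_0=\fl{(K-1)/5}=0$, so the induced $f$- and $g$-initial segments are trivial, and the required sublinear bounds $f(n)\le n$, $g(n-1)\le n$, $g(n)\le n$ for all $n>0$ follow from slowness: in Proposition~\ref{prop:fg1} the value $i$ fills a block of $2i+1$ (resp.\ $2i$) consecutive indices, so $f(n),g(n)=\Theta(\sqrt n)$ and these bounds are far from tight; and since all entries of $\ic{4,2,5,3,1}$ are integers, the caveat about references to non-integer terms is vacuous.

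Finally I would translate the theorem's conclusion into the stated form. With $K=5$ the theorem gives $V_G(5(k{+}1)+j)$ in terms of $f(k)$ and $g(k)$ for $j\in\{0,1,2,3,4\}$; re-indexing $k\mapsto k-1$ and inserting $b_0=1$, $b_1=-1$, $b_2=7$, $b_4=8$, $m=1$ yields the five displayed equations, using $5(k{-}1)+b_2=5k+2$ and $5(k{-}1)+b_4=5k+3$ for the linear rows. The one wrinkle is that the $f,g$ generated by the theorem's data carry an index-$0$ term, forced by $V_G(K)=5f(0)+b_0$ and $V_G(K{+}1)=5g(0)+b_1$ (giving $f(0)=0$, $g(0)=1$), which makes them coincide with the Proposition~\ref{prop:fg1} sequences after a one-step index shift; the substitution $k\mapsto k-1$ exactly cancels this shift, leaving $f$ and $g$ indexed from $1$ with $f(1)=0$ as in Proposition~\ref{prop:fg1}. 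I expect this reconciliation of the two indexing conventions, together with handling restriction~\ref{it:ic} in the degenerate case $n_0=0$, to be the only delicate point; everything else is routine substitution into Theorem~\ref{thm:infv}. As a sanity check one can run the $V$-recurrence by hand to get $V_G(6),\dots,V_G(10)=4,7,5,8,6$, matching $5g(1)-1$, $5\cdot1+2$, $5$, $5\cdot1+3$, $5f(2)+1$.
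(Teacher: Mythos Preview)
Your approach is the same idea as the paper's---instantiate Theorem~\ref{thm:infv} and re-index---but you pick $K=5$ whereas the paper picks $K=10$. That difference is not cosmetic. With $K=10$ the paper first runs the recurrence out to nine terms, $4,2,5,3,1,4,7,5,8$, then applies the theorem with $b_2=12$, $b_4=13$. This gives $n_0=1$, so restriction~\ref{it:ic} hands you a genuine one-term seed $\ic{0}$ for $f$ and $\ic{1}$ for $g$, which is exactly the initial data of Proposition~\ref{prop:fg1}; the re-indexing at the end is then a clean shift by $2$.

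Your choice $K=5$ forces $n_0=0$, so restriction~\ref{it:ic} produces \emph{empty} initial segments for $f$ and $g$. The Golomb-like system then has no seed at all, and Theorem~\ref{thm:infv} as stated gives you nothing to identify with the sequences of Proposition~\ref{prop:fg1}. You try to recover by declaring that $f(0)=0$ is ``forced'' by $V_G(5)=1$, but note that $V_G(5)$ is the fifth \emph{initial condition}, not a value produced by the recurrence; the theorem's induction (which computes $V_G(n)$ from the recurrence for $n\ge K$) does not apply at $n=5$, and indeed the recurrence there would give $V_G(2)+V_G(1)=6\neq1$. So your ``one wrinkle'' is really a place where the hypotheses of Theorem~\ref{thm:infv} are not met, and you are implicitly extending the theorem rather than applying it. The cleanest fix is exactly what the paper does: advance $K$ to $10$ (equivalently, compute a few more terms first) so that $n_0\ge1$ and restriction~\ref{it:ic} is nondegenerate.
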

\begin{proof}
Let $K=10$, $b_0=1$, $b_1=-1$, $b_2=12$, $b_4=13$, $a_f=2$, $a_g=3$, and $m=1$. These values satisfy all of the requirements on these parameters. The first nine terms of the sequence resulting from the initial conditions $\ic{4,2,5,3,1}$ are $4,2,5,3,1,4,7,5,8$. We now check that these satisfy all seven requirements:
\begin{itemize}
\item We have $V_G\p{K+5-b_4}=V_G\p{10+5-13}=V_G\p{2}=2=a_f$, as required.
\item We have $V_G\p{K+6-b_2}=V_G\p{10+6-12}=V_G\p{4}=3=a_g$, as required.
\item We have $V_G\p{K+3-b_2}+V_G\p{K+8-b_4}=V_G\p{10+3-12}+V_G\p{10+8-13}=V_G\p{1}+V_G\p{5}=4+1=5$, as required.
\item We have $V_G\p{K+2-5}=V_G\p{7}=7=12-5$, as required.
\item We have $V_G\p{K-2}=V_G\p{8}=5$, as required.
\item We have $V_G\p{K+4-5}=V_G\p{9}=8=13-5$, as required.
\item In this case, $n_0=1$. We have $V_G\p{5}=1$ and $V_G\p{6}=4$. This means our initial conditions to the recurrence system are $\ic{0}$ for $f$ and $\ic{1}$ for $g$. Furthermore, we see that the Golomb-like system obtained is precisely the one in Proposition~\ref{prop:fg1}, so we obtain those sequences. We now observe that, in those sequences, for $n>1$, $f\p{n}\leq n$, $g\p{n-1}\leq n$, and $g\p{n}\leq n$, meaning this final restriction is satisfied.
\end{itemize}
The above means we have a solution of the form
\[
\begin{cases}
V_G\p{10+5k}=5f\p{k}+1\\
V_G\p{10+5k+1}=5g\p{k}-1\\
V_G\p{10+5k+2}=5k+12\\
V_G\p{10+5k+3}=5\\
V_G\p{10+5k+4}=5k+13,
\end{cases}
\]
beginning at index~$10$. Re-indexing and noting that the pattern actually starts earlier results in the desired solution.
\end{proof}

\begin{pro}\label{prop:v2}
The initial conditions $\ic{3,1,4,2,5,3}$ to the Hofstadter $V$-recurrence produce a solution of the following form for $k\geq2$:
\[
\begin{cases}
V_G\p{5k}=10\\
V_G\p{5k+1}=5k-2\\
V_G\p{5k+2}=5f\p{k+1}+1\\
V_G\p{5k+3}=5g\p{k+1}-1\\
V_G\p{5k+4}=5k-3,
\end{cases}
\]
where $f$ and $g$ are the sequences in Proposition~\ref{prop:fg2}.
\end{pro}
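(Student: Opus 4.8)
The plan is to obtain this proposition as a corollary of Theorem~\ref{thm:infv}, in exactly the manner Proposition~\ref{prop:v1} was obtained. The first step is to reverse-engineer the parameters $K$, $b_0$, $b_1$, $b_2$, $b_4$, $a_f$, $a_g$, $m$ from the stated form. The constant subsequence is $10$, so $m=2$; the trailing $+1$ and $-1$ in the $f$- and $g$-slots give $b_0=1$ and $b_1=-1$; and demanding that the Golomb-like system in the theorem collapse to the one in Proposition~\ref{prop:fg2} — i.e. $\frac{b_1+1}{5}=\frac{b_0-1}{5}=0$, $\frac{b_1-b_0+a_f}{5}=0$, $\frac{b_0-b_1+a_g}{5}=2$ — forces $a_f=2$ and $a_g=8$. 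The $f$-slot lives at indices congruent to $2$ mod $5$, so $K\equiv2\pmod 5$, and aligning the two linear subsequences with those in Proposition~\ref{prop:v2} gives $b_2=K-5$ and $b_4=K+1$. I would take $K=22$ (hence $b_2=17$, $b_4=23$), which I expect to be the smallest admissible value.

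Next I would compute the first $21$ terms of the sequence generated by $\ic{3,1,4,2,5,3}$, namely $3,1,4,2,5,3,6,4,7,10,8,6,9,7,10,13,6,14,12,10,18$, and verify the eight parameter hypotheses together with the seven conditions \ref{it:af}--\ref{it:ic} of Theorem~\ref{thm:infv}. Conditions \ref{it:af}--\ref{it:b4} are short arithmetic checks against this list (three of them are in fact automatic given the parameter relations above). For condition~\ref{it:ic}, with $n_0=\fl{\frac{K-1}{5}}=4$, the recipe yields $\ic{0,1,1,1}$ for the $f$-sequence and $\ic{1,1,2,3}$ for the $g$-sequence; I would observe that these are the first four terms of the sequences of Proposition~\ref{prop:fg2} (the fourth term of each being determined by the first three through the recurrences), so the system indeed generates those $f$ and $g$, and the bounds $f(n)\le n$, $g(n-1)\le n$, $g(n)\le n$ for $n>4$ hold because the explicit census in Proposition~\ref{prop:fg2} shows $f$ and $g$ grow like $\sqrt n$. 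Theorem~\ref{thm:infv} then delivers the interleaved form from index $22$ onward; re-indexing, and checking directly that the displayed pattern already holds back to the asserted range $k\ge2$, completes the proof.

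The step I expect to be the main obstacle is the bookkeeping around conditions \ref{it:b2} and \ref{it:b4}, which is also what pins $K$ down. These conditions require the two linear subsequences, extended $m=2$ steps backward past index $K$, to keep matching the actual sequence; but in the $\ic{3,1,4,2,5,3}$ solution the $b_2$-subsequence only enters its arithmetic progression from around index $14$ (for instance $V_G(9)=7$, not the $2$ that a genuine progression would require). Thus one needs $K+2-5m\ge14$, i.e. $K\ge22$, and the naive guesses $K=12$ or $K=17$ both fail condition~\ref{it:b2}. Once the correct $K$ is chosen, the only remaining subtlety is matching the data from condition~\ref{it:ic} to Proposition~\ref{prop:fg2} and tracking the index shift between the theorem's output and the form stated here — precisely the ``re-indexing'' step invoked in the proof of Proposition~\ref{prop:v1}.
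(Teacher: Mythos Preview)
Your proposal is correct and follows essentially the same approach as the paper: you choose the identical parameters $K=22$, $b_0=1$, $b_1=-1$, $b_2=17$, $b_4=23$, $a_f=2$, $a_g=8$, $m=2$, verify conditions~\ref{it:af}--\ref{it:ic} of Theorem~\ref{thm:infv} against the first $21$ computed terms, recover the initial conditions $\ic{0,1,1,1}$ and $\ic{1,1,2,3}$ for $f$ and $g$, and then re-index. Your added discussion of why smaller $K$ fails condition~\ref{it:b2} is a nice explanatory touch the paper omits, but the argument is otherwise the same.
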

\begin{proof}
Let $K=22$, $b_0=1$, $b_1=-1$, $b_2=17$, $b_4=23$, $a_f=2$, $a_g=8$, and $m=2$. These values satisfy all of the requirements on these parameters. The first $21$ terms of the sequence resulting from the initial conditions $\ic{3,1,4,2,5,3}$ are 
\[
3,1,4,2,5,3,6,4,7,10,8,6,9,7,10,13,6,14,12,10,18,6.
\]
We now check that these satisfy all seven requirements:
\begin{itemize}
\item We have $V_G\p{K+5-b_4}=V_G\p{22+5-23}=V_G\p{4}=2=a_f$, as required.
\item We have $V_G\p{K+6-b_2}=V_G\p{22+6-17}=V_G\p{11}=8=a_g$, as required.
\item We have $V_G\p{K+3-b_2}+V_G\p{K+8-b_4}=V_G\p{22+3-17}+V_G\p{22+8-23}=V_G\p{8}+V_G\p{7}=4+6=10$, as required.
\item We have $V_G\p{K+2-10}=V_G\p{14}=7=17-10$, and $V_G\p{K+2-5}=V_G\p{19}=12=17-5$, as required.
\item We have $V_G\p{K-2}=V_G\p{10}=10$, as required.
\item We have $V_G\p{K+4-10}=V_G\p{16}=13=23-10$, and $V_G\p{K+4-5}=V_G\p{21}=18=23-5$, as required.
\item In this case, $n_0=4$. We have $V_G\p{2}=1$, $V_G\p{7}=6$, $V_G\p{12}=6$, $V_G\p{17}=6$ and $V_G\p{3}=4$, $V_G\p{8}=4$, $V_G\p{13}=9$, $V_G\p{18}=14$. This means our initial conditions to the recurrence system are $\ic{0,1,1,1}$ for $f$ and $\ic{1,1,2,3}$ for $g$. Furthermore, we see that the Golomb-like system obtained is precisely the one in Proposition~\ref{prop:fg2}, so we obtain those sequences. (The initial conditions there are the first three terms of each of these initial conditions, but the fourth terms here equal the fourth terms in those sequences.) We now observe that, in those sequences, for $n>4$, $f\p{n}\leq n$, $g\p{n-1}\leq n$, and $g\p{n}\leq n$, meaning this final restriction is satisfied.
\end{itemize}
The above means we have a solution of the form
\[
\begin{cases}
V_G\p{22+5k}=5f\p{k}+1\\
V_G\p{22+5k+1}=5g\p{k}-1\\
V_G\p{22+5k+2}=5k+17\\
V_G\p{22+5k+3}=10\\
V_G\p{22+5k+4}=5k+23,
\end{cases}
\]
beginning at index~$22$. Re-indexing and noting that the pattern actually starts earlier results in the desired solution.
\end{proof}

\subsection{A Companion to the $V$-Recurrence}
The patterns we observe in the $V$-recurrence all repeat with a period of $5$. The $V$-recurrence, $V\p{n}=V\p{n-V\p{n-1}}+V\p{n-V\p{n-4}}$ prominently features a $1$ and a $4$, which sum to $5$. There is another recurrence, $H\p{n}=H\p{n-H\p{n-2}}+H\p{n-H\p{n-3}}$ with a similar property. In fact, this recurrence seems to be a sort of \emph{companion} to the $V$-recurrence, in that it has a similar families of period-$5$ solutions. Like the $V$-recurrence, $H$ has four fundamentally different families of solutions that eventually consist of interleavings of five constant or linear sequences (see Table~\ref{tab:ich}). More importantly, there is a family of solutions analogous to the solutions to $V$ described in Theorem~\ref{thm:infv}.

\begin{table}
\begin{tabular}{|c|c|}\hline
\textbf{Pattern} & \textbf{Initial Condition}\\\hline
C,C,C,L,L & \[ \ic{5, 3, 0, -1, -1, 5, 0, 1, 4, 2, 5, 3, 10}  \]\\\hline
C,C,L,C,L & \[ \ic{2, 0, 5, 0, 0, 0, 5, 5, 5, 3, 2}  \]\\\hline
C,C,L,L,L & \[ \ic{7, 0, -3, 0, 4, 7, 5, 0, 7, 4, 0, 8, 7, 5, 4, 7, 15, 12, 10}  \]\\\hline
C,C,L,L,L & \[ \ic{6, 1, 0, 3, 3, 0, 6, 4, -1, 3, 6, 0, 12, 4, 3, 6, 16, 14, 9}  \]\\\hline
\end{tabular}
\caption{Patterns and representative initial conditions for each of the four families of period-$5$ solutions to the recurrence $H$. (C=constant, L=linear)}
\label{tab:ich}
\end{table}

\begin{thm}\label{thm:infh}
Let $K$, $b_0$, $b_1$, $b_2$, $b_4$, $a_f$, $a_g$, and $m$ be integers satisfying the following properties:
%\begin{multicols}{2}
\begin{itemize}
\item $b_0\equiv1\pmod{5}$ and $6\leq b_0<K+2$
\item $b_1\equiv4\pmod{5}$ and $9\leq b_1<K+3$
\item $b_2\equiv2\pmod{5}$
\item $b_4\equiv3\pmod{5}$
\item $a_f\equiv 4\pmod{5}$
\item $a_g\equiv 1\pmod{5}$
\item $a_f+a_g>0$
\item $m\geq1$.
\end{itemize}
%\end{multicols}
\noindent
Define the following Golomb-like system:
\[
\begin{cases}
f\p{n}=g\!\pb{n-g\p{n-1}-\frac{b_4+2}{5}}+\frac{b_4-b_2+a_f}{5}\\
g\p{n}=f\!\pb{n-f\p{n}-\frac{b_2-2}{5}}+\frac{b_2-b_4+a_g}{5}.
\end{cases}
\]
%where the first terms in the sequences defined by these recurrences are at index $n_0=-\fl{\frac{K+1}{5}}$. 
Then, there is a solution $H_G$ to the $H$-recurrence that, starting at index $K$, has the form
\[
\begin{cases}
H_G\p{K+5k}=5k+b_0\\
H_G\p{K+5k+1}=5k+b_1\\
H_G\p{K+5k+2}=5f\p{k}+b_2\\
H_G\p{K+5k+3}=5m\\
H_G\p{K+5k+4}=5g\p{k}+b_4
\end{cases}
\]
with any initial condition satisfying the following properties:
\begin{enumerate}
\item\label{it:afh} $H_G\p{K+2-b_0}=a_f$
\item\label{it:agh} $H_G\p{K+4-b_1}=a_g$
\item\label{it:5mh} $H_G\p{K+3-b_0}+V_G\p{K+3-b_1}=5m$
\item\label{it:b0h} For each integer $1\leq i\leq m$, $H_G\p{K-5i}=b_0-5i$
\item\label{it:b1h} For each integer $1\leq i\leq m$, $H_G\p{K+1-5i}=b_1-5i$
%\item\label{it:b3} For each integer $1\leq i\leq m$, $V_G\p{K+3-5i}=5m$
\item\label{it:b3h} $H_G\p{K-2}=5m$
%\item Let $p_1,p_2,\ldots,p_t$ denote the sequence of terms in the initial condition at indices congruent to $K$ mod $5$, and let $q_1,q_2,\ldots,q_t$ denote the sequence of terms in the initial condition at indices congruent to $K+1$ mod $5$ (excluding $V_G\p{1}$ if relevant). The sequences $\frac{p_1-b_0}{5},\frac{p_2-b_0}{5},\ldots,\frac{p_t-b_0}{5}$ and $\frac{q_1-b_1}{5},\frac{q_2-b_1}{5},\ldots,\frac{q_t-b_1}{5}$, when fed to the system for $f$ and $g$
\item\label{it:ich} Let $n_0=\fl{\frac{K+1}{5}}$. The initial conditions
\[
\ic{\frac{H_G\p{K-5n_0+2}-b_0}{5}, \frac{H_G\p{K-5\pb{n_0-1}+2}-b_0}{5}, \frac{H_G\p{K-5\pb{n_0-2+2}}-b_0}{5}, \ldots, \frac{H_G\p{K-3}-b_0}{5}}
\]
for the $f$-sequence and
\[
\ic{\frac{H_G\p{K-5n_0+4}-b_1}{5}, \frac{H_G\p{K-5\pb{n_0-1}+4}-b_1}{5}, \frac{H_G\p{K-5\pb{n_0-2}+4}-b_1}{5}, \ldots, \frac{H_G\p{K-1}-b_1}{5}}
\]
for the $g$-sequence generate sublinear sequences where, for all $n>n_0$, $f\p{n-1}\leq n+\frac{b_2+3}{5}$, $f\p{n}\leq n+\frac{b_2-2}{5}$, and $g\p{n-1}\leq n+\frac{b_4+2}{5}$.%, and $g\p{n}\leq n+\frac{b_4+2}{5}$.
\end{enumerate}
\end{thm}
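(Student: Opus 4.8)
The plan is to mirror the proof of Theorem~\ref{thm:infv}: one argues by induction on the index $n$, the base case being carried by the initial conditions together with restrictions~\ref{it:afh}--\ref{it:ich}, and the inductive step branching into five cases according to the residue of $n-K$ modulo~$5$. The organising idea is that the interleaved form displayed in the theorem satisfies the $H$-recurrence if and only if the pair $(f,g)$ satisfies the displayed Golomb-like system, with the negative-index values of $f$ and $g$ prescribed by the initial data and with $f,g$ slow enough that no back-reference ever escapes the range already pinned down; so the task is to verify this correspondence one residue class at a time. In each case one substitutes $H(n)=H(n-H(n-2))+H(n-H(n-3))$, evaluates the mother spot $n-H_G(n-2)$ and the father spot $n-H_G(n-3)$ using the inductive hypothesis or the relevant restriction, reduces the two resulting arguments modulo~$5$ to see which of the five ``slots'' they occupy, and reads off the values (with $f$ and $g$ understood, exactly as in Theorem~\ref{thm:infv}, as the continuations of the initial segments of $H_G$ prescribed by restriction~\ref{it:ich}). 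The offsets $\frac{b_4+2}{5}$, $\frac{b_2-2}{5}$, $\frac{b_4-b_2+a_f}{5}$, $\frac{b_2-b_4+a_g}{5}$ in the system are precisely the corrections that make everything close up, and the congruences $b_2\equiv2$, $b_4\equiv3$, $a_f\equiv4$, $a_g\equiv1\pmod{5}$ are exactly what make these four offsets integers.

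Spelling out the cases: when $n-K\equiv3\pmod{5}$, both back-references sit at the fixed indices $K+3-b_0$ and $K+3-b_1$, because slots $0$ and $1$ of each block are linear in $k$, and restriction~\ref{it:5mh} (which should read $H_G(K+3-b_0)+H_G(K+3-b_1)=5m$) supplies their sum $5m$. The two cases in which the system's own recurrences emerge are $n-K\equiv2$, where the mother spot $K+2-b_0$ has value $a_f$ by restriction~\ref{it:afh} while the father spot $K+5k+2-5g(k-1)-b_4$ reduces to a slot-$4$ index, together forcing $5f(k)+b_2=a_f+5g\!\left(k-g(k-1)-\frac{b_4+2}{5}\right)+b_4$, i.e.\ the $f$-equation of the system; and $n-K\equiv4$, where the father spot $K+4-b_1$ has value $a_g$ by restriction~\ref{it:agh} while the mother spot $K+5k+4-5f(k)-b_2$ reduces to a slot-$2$ index, together forcing the $g$-equation. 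The remaining residues $0$ and $1$ reproduce the two linear slots, using the value $5m$ from restriction~\ref{it:b3h} (or from slot~$3$) and the shifted linear values $b_0-5i$, $b_1-5i$ from~\ref{it:b0h} and~\ref{it:b1h}. Each of the three sublinear bounds in~\ref{it:ich} gets used once: the bound on $f(n-1)$ places the $n-K\equiv0$ father spot into slot~$3$, the bound on $f(n)$ does the same for the $n-K\equiv4$ mother spot, and the bound on $g(n-1)$ handles the $n-K\equiv1$ and $n-K\equiv2$ spots.

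I expect the main obstacle to be the boundary bookkeeping for small $k$, rather than any single ``generic'' case. For $k$ near $0$ the back-references drop below index $K$ and must be matched against the finitely many extra initial-segment constraints: the $m$ values in restrictions~\ref{it:b0h} and~\ref{it:b1h} that extend the two linear slots to blocks $-1,\dots,-m$, the single value $H_G(K-2)=5m$ in~\ref{it:b3h}, and the $f$- and $g$-initial conditions in~\ref{it:ich} (these also cover the slot-$2$ and slot-$4$ back-references at negative blocks that arise in the $n-K\equiv2$ and $n-K\equiv4$ cases). This is precisely the role played by restrictions~\ref{it:b2},~\ref{it:b3},~\ref{it:b4} in the proof of Theorem~\ref{thm:infv} and by the explicit checks in Propositions~\ref{prop:v1} and~\ref{prop:v2}; the bounds $6\le b_0<K+2$ and $9\le b_1<K+3$ are imposed precisely to keep these fixed indices legal. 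A secondary delicate point is fixing the placement constants in the modular reductions so that every back-reference is certifiably in a covered block, which calls for the same care --- and the same mild subtlety about the exact shape of the sublinear bounds --- as in the proof of Theorem~\ref{thm:infv}; for the concrete specializations one would go on to give, $f$ and $g$ are among the slow solutions of Propositions~\ref{prop:fg1},~\ref{prop:fg2}, and~\ref{prop:fg12}, which are provably $\Theta(\sqrt{n})$, so the bounds there are transparent. Everything past these points is routine arithmetic modulo~$5$.
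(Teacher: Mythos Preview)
Your proposal is correct and follows essentially the same route as the paper's own proof: induction on the index with five cases by residue of $n-K$ modulo $5$, each case substituting into the $H$-recurrence, evaluating the mother and father spots via the inductive hypothesis or the enumerated restrictions, and reading off the slot values to close up the computation. Your case-by-case identification of which restriction and which sublinear bound is invoked where matches the paper exactly (including your correct observation that restriction~\ref{it:5mh} should read $H_G$, not $V_G$).
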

\begin{proof}
The proof is by induction on the index, with base case provided by the initial condition. Suppose the parameters and initial conditions satisfy all of the listed conditions, and furthermore suppose that the general form of the solution holds through index $n-1$ for some $n\geq K$. We now have five cases to consider:
\begin{description}
\item[$n-K\equiv 0\pmod{5}$:]
In this case, $n=K+5k$ for some $k\geq0$. We have
\[
H_G\p{K+5k}=H_G\p{K+5k-H_G\p{K+5k-2}}+H_G\p{K+5k-H_G\p{K+5k-3}}.
\]
By induction, $H_G\p{K+5k-2}=5m$, as this term falls in the inductive hypothesis or into restriction~\ref{it:b3h} on the initial condition. Similarly, $H_G\p{K+5k+3}=5f\p{k-1}+b_2$, as this term falls in the inductive hypothesis or into restriction~\ref{it:ich}. So, we have
\begin{align*}
H_G\p{K+5k}&=H_G\p{K+5k-5m}+H_G\p{K+5k-5f\p{k-1}-b_2}\\
&=H_G\p{K+5k-5m}+H_G\p{K+5\pb{k-f\p{k-1}}-b_2}.
\end{align*}
We now observe that $H_G\p{K+5k-5m}=5\pb{k-m}+b_0$, as this term falls in the inductive hypothesis or in restriction~\ref{it:b0h} on the initial condition. Also, since $b_2\equiv2\pmod{5}$ and since restriction~\ref{it:ich} guarantees $f\p{k-1}\leq k+\frac{b_2+3}{5}$, we have $H_G\p{K+5\pb{k-f\p{k-1}}-b_2}=5m$. Putting these together yields
\begin{align*}
H_G\p{K+5k}&=5\pb{k-m}+b_0+5m=5k+b_0,
\end{align*}
as required.
%%%%%%%%%%%%%%%
\item[$n-K\equiv 1\pmod{5}$:]
In this case, $n=K+5k+1$ for some $k\geq0$. We have
\[
H_G\p{K+5k+1}=H_G\p{K+5k+1-H_G\p{K+5k-1}}+H_G\p{K+5k+1-H_G\p{K+5k-2}}.
\]
By induction, $H_G\p{K+5k-1}=5g\p{k-1}+b_4$, as this term falls in the inductive hypothesis or in restriction~\ref{it:ich} on the initial condition. Similarly, $H_G\p{K+5k-2}=5m$, as this term either falls in the inductive hypothesis or in restriction~\ref{it:b3h} on the initial condition. So, we have
\begin{align*}
H_G\p{K+5k+1}&=H_G\p{K+5k+1-5g\p{k-1}-b_4}+H_G\p{K+5k+1-5m}\\
&=H_G\p{K+5\pb{k-g\p{k-1}}+1-b_4}+H_G\p{K+5k+1-5m}.
\end{align*}
We now observe that $H_G\p{K+5k+1-5m}=5\pb{k-m}+b_1$, as this term falls in the inductive hypothesis or in restriction~\ref{it:b1h} on the initial condition. Also, since $b_4\equiv3\pmod{5}$ and since restriction~\ref{it:ic} guarantees $g\p{k-1}\leq k+\frac{b_4+2}{5}$, we have $H_G\p{K+5\pb{k-g\p{k-1}}+1-b_4}=5m$. Putting these together yields
\begin{align*}
H_G\p{K+5k+1}&=5m+5\pb{k-m}+b_1=5k+b_1,
\end{align*}
as required.
%%%%%%%%%%%%%%%
\item[$n-K\equiv 2\pmod{5}$:]
In this case, $n=K+5k+2$ for some $k\geq0$. We have
\[
H_G\p{K+5k+2}=H_G\p{K+5k+2-H_G\p{K+5k}}+H_G\p{K+5k+2-H_G\p{K+5k-1}}.
\]
By induction, $H_G\p{K+5k}=5k+b_0$, as this term falls in the inductive hypothesis. Similarly, $H_G\p{K+5k-1}=5g\p{k-1}+b_4$, as this term either falls in the inductive hypothesis or in restriction~\ref{it:ich} on the initial condition. So, we have
\begin{align*}
H_G\p{K+5k+2}&=H_G\p{K+5k+2-5k-b_0}+H_G\p{K+5k+2-5g\p{k-1}-b_4}\\
&=H_G\p{K+2-b_0}+H_G\p{K+5\pb{k-g\p{k-1}}+2-b_4}.
\end{align*}
We now observe that $H_G\p{K+2-b_0}=a_f$ by restriction~\ref{it:afh} on the initial condition. Also, since $b_4\equiv3\pmod{5}$ and since restriction~\ref{it:ich} guarantees $g\p{k-1}\leq k+\frac{b_4+2}{5}$, we have $H_G\p{K+5\pb{k-g\p{k-1}}+2-b_4}=5g\!\pb{k-g\p{k-1}-\frac{b_4+2}{5}}+b_4$. Putting these together yields
\begin{align*}
H_G\p{K+5k+2}&=a_f+5g\!\pb{k-g\p{k-1}-\frac{b_4+2}{5}}+b_4\\
&=5g\!\pb{k-g\p{k-1}-\frac{b_4+2}{5}}+\pb{b_4-b_2+a_f}+b_2\\
&=5f\p{k}+b_2,
\end{align*}
as required.
%%%%%%%%%%%%%%%
\item[$n-K\equiv 3\pmod{5}$:]
In this case, $n=K+5k+3$ for some $k\geq0$. We have
\[
H_G\p{K+5k+3}=H_G\p{K+5k+3-H_G\p{K+5k+1}}+H_G\p{K+5k+3-H_G\p{K+5k}}.
\]
By induction, $H_G\p{K+5k+1}=5k+b_1$, as this term falls in the inductive hypothesis. Similarly, $H_G\p{K+5k}=5k+b_0$, as this term also falls in the inductive hypothesis. So, we have
\begin{align*}
H_G\p{K+5k+3}&=H_G\p{K+5k+3-5k-b_1}+H_G\p{K+5k+3-5k-b_0}\\
&=H_G\p{K+3-b_1}+H_G\p{K+3-b_0}.
\end{align*}
By restriction~\ref{it:5m}, this equals $5m$, as required.
%%%%%%%%%%%%%%%
\item[$n-K\equiv 4\pmod{5}$:]
In this case, $n=K+5k+4$ for some $k\geq0$. We have
\[
H_G\p{K+5k+4}=H_G\p{K+5k+4-H_G\p{K+5k+2}}+H_G\p{K+5k+4-H_G\p{K+5k+1}}.
\]
By induction, $H_G\p{K+5k-1}=5k+b_1$, as this term falls in the inductive hypothesis. Similarly, $H_G\p{K+5k+2}=5f\p{k}+b_2$, as this term falls in the inductive hypothesis. So, we have
\begin{align*}
H_G\p{K+5k+4}&=H_G\p{K+5k+4-5f\p{k}-b_2}+H_G\p{K+5k+4-5k-b_1}\\
&=H_G\p{K+5\pb{k-f\p{k}}+4-b_2}+H_G\p{K+4-b_1}.
\end{align*}
We now observe that $H_G\p{K+4-b_1}=a_g$ by restriction~\ref{it:agh} on the initial condition. Also, since $b_2\equiv2\pmod{5}$ and since restriction~\ref{it:ich} guarantees $f\p{k}\leq k+\frac{b_2-2}{5}$, we have $H_G\p{K+5\pb{k-f\p{k}}+4-b_2}=5f\!\pb{k-f\p{k}-\frac{b_2-2}{5}}+b_0$. Putting these together yields
\begin{align*}
H_G\p{K+5k+4}&=5f\!\pb{k-f\p{k}-\frac{b_2-2}{5}}+b_2+a_g\\
&=5f\!\pb{k-f\p{k}-\frac{b_2-2}{5}}+\pb{b_2-b_4+a_g}+b_4\\
&=5g\p{k}+b_4,
\end{align*}
as required.
\end{description}
\end{proof}

\subsection{Concrete Examples for the $H$-recurrence}
Let us now see a couple of concrete solutions to the $H$-recurrence corresponding to specific settings of the parameters in Theorem~\ref{thm:infh}.
\begin{pro}\label{prop:h1}
The initial conditions $\ic{3,1,4,2}$ to the $H$-recurrence produce a solution of the following form for $k\geq1$:
\[
\begin{cases}
H_G\p{5k}=5\\
H_G\p{5k+1}=5g\p{k}-2\\
H_G\p{5k+2}=5k+1\\
H_G\p{5k+3}=5k+4\\
H_G\p{5k+4}=5f\p{k+1}+2,
\end{cases}
\]
where $f$ and $g$ are the sequences in Proposition~\ref{prop:fg1}.
\end{pro}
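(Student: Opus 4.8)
The plan is to derive Proposition~\ref{prop:h1} as a direct specialization of Theorem~\ref{thm:infh}, exactly as Propositions~\ref{prop:v1} and~\ref{prop:v2} are derived from Theorem~\ref{thm:infv}. First I would choose parameters $K$, $b_0$, $b_1$, $b_2$, $b_4$, $a_f$, $a_g$, $m$ so that the Golomb-like system in Theorem~\ref{thm:infh} collapses to the one in Proposition~\ref{prop:fg1}. That system is $f(n)=g(n-g(n-1))$, $g(n)=f(n-f(n))+1$, so I need $\frac{b_4+2}{5}=0$, $\frac{b_4-b_2+a_f}{5}=0$, $\frac{b_2-2}{5}=0$, and $\frac{b_2-b_4+a_g}{5}=1$. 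Taking $b_2=2$ and $b_4=-2$ forces $a_f=b_2-b_4=4$ (consistent with $a_f\equiv4\pmod 5$) and $a_g=5+b_4-b_2=1$ (consistent with $a_g\equiv1\pmod5$). Comparing the general form of $H_G$ in Theorem~\ref{thm:infh} with the stated form in the proposition — where the $5k+b_0$ block sits at index $\equiv 3$, the $5k+b_1$ block at index $\equiv 4$, and the constant block $5m$ at index $\equiv 0$ — I read off that the theorem's index $K+5k+3$ should line up with the proposition's index $5k$, i.e.\ a shift making $b_0$ play the role of the ``$5k+4$'' residue and $b_1$ the ``$5(k{+}1)+2$'' residue; concretely one finds $m=1$ (so $5m=5$), $b_0\equiv 4\pmod5$... and since the proposition writes $H_G(5k+2)=5k+1$ and $H_G(5k+3)=5k+4$, the appropriate choices are $b_0=9$, $b_1=6$ after re-indexing (these satisfy $b_0\equiv1$, $b_1\equiv4$, and the size bounds $6\le b_0<K+2$, $9\le b_1<K+3$ for $K$ chosen large enough, here $K=8$).

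Next I would compute enough initial terms of the $H$-sequence generated by $\ic{3,1,4,2}$ — roughly the first $K+8$ terms — and verify, one bullet at a time, the seven numbered restrictions in Theorem~\ref{thm:infh}: the two point conditions $H_G(K+2-b_0)=a_f$ and $H_G(K+4-b_1)=a_g$; the sum condition $H_G(K+3-b_0)+H_G(K+3-b_1)=5m$; the two arithmetic-progression conditions $H_G(K-5i)=b_0-5i$ and $H_G(K+1-5i)=b_1-5i$ for $1\le i\le m$ (just $i=1$ since $m=1$); the condition $H_G(K-2)=5m$; and finally the initial-condition condition~\ref{it:ich}, which requires extracting the seed values for the $f$- and $g$-sequences and checking they match the seeds $f(1)=0$ of Proposition~\ref{prop:fg1}, together with the sublinearity bounds $f(n-1)\le n+\frac{b_2+3}{5}$, $f(n)\le n+\frac{b_2-2}{5}$, $g(n-1)\le n+\frac{b_4+2}{5}$, which with $b_2=2$, $b_4=-2$ reduce to $f(n-1)\le n+1$, $f(n)\le n$, $g(n-1)\le n$ — these follow immediately from the explicit block description in Proposition~\ref{prop:fg1} (each nonnegative $i$ occupies $f$-indices $i^2+1$ through $i^2+2i+1$, so $f(n)\le n$ there). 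Once all seven are confirmed, Theorem~\ref{thm:infh} hands us the solution in the shifted form $H_G(K+5k+j)=\ldots$, and a final re-indexing (and the observation that the pattern in fact begins earlier than index $K$, so it holds for all $k\ge1$) produces the stated form.

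The main obstacle is bookkeeping rather than mathematics: pinning down the correct values of $K$, $b_0$, $b_1$ so that the theorem's five-block template, whose blocks appear in the cyclic order (linear, linear, $f$, constant, $g$) starting at residue $K\bmod5$, matches the proposition's template (constant, $g$, linear, linear, $f$) starting at residue $0$. This is a matter of identifying which residue class of $n-K$ corresponds to which residue class of $n$, then translating the eight constraints and all seven restrictions through that cyclic shift without sign or off-by-one errors; the congruence conditions $b_i\equiv\cdot\pmod5$ and the inequalities $6\le b_0<K+2$, $9\le b_1<K+3$ serve as useful consistency checks that the chosen shift is internally coherent. After that, every remaining step is the kind of finite arithmetic verification already carried out verbatim in the proofs of Propositions~\ref{prop:v1} and~\ref{prop:v2}, and I would present it in the same itemized style.
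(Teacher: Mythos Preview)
Your approach is exactly the paper's: specialize Theorem~\ref{thm:infh} by choosing parameters that collapse the Golomb-like system to the one in Proposition~\ref{prop:fg1}, compute enough initial terms of the $\ic{3,1,4,2}$ sequence, verify the seven restrictions, and re-index. Your derivation of $b_2=2$, $b_4=-2$, $a_f=4$, $a_g=1$, $m=1$ is correct and matches the paper.

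The gap is in your choice of $K$, $b_0$, $b_1$. You propose $K=8$, $b_0=9$, $b_1=6$ and assert ``these satisfy $b_0\equiv1$, $b_1\equiv4$'' --- but $9\equiv4\pmod5$ and $6\equiv1\pmod5$, so both congruences fail, and $b_1=6$ also violates $9\le b_1$. With those values the theorem does not apply. The paper takes $K=12$, $b_0=11$, $b_1=14$: then $11\equiv1$, $14\equiv4$, $6\le11<14$, $9\le14<15$, and the first eleven terms $3,1,4,2,5,3,6,9,7,5,3$ make all seven restrictions check (e.g.\ $H_G(K+2-b_0)=H_G(3)=4=a_f$, $H_G(K-2)=H_G(10)=5=5m$, seeds $\ic{0,1}$ for $f$ and $\ic{1,1}$ for $g$). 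The cyclic alignment you were struggling with is that the theorem's residue $K\equiv2\pmod5$ puts the linear block $5k+b_0$ at indices $\equiv2$, the linear block $5k+b_1$ at indices $\equiv3$, the $f$-block at indices $\equiv4$, the constant $5m$ at indices $\equiv0$, and the $g$-block at indices $\equiv1$ --- exactly the proposition's layout after the shift $k\mapsto k+2$. Fix the three numbers and your plan goes through verbatim.
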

\begin{proof}
Let $K=12$, $b_0=11$, $b_1=14$, $b_2=2$, $b_4=-2$, $a_f=4$, $a_g=1$, and $m=1$. These values satisfy all of the requirements on these parameters. The first $11$ terms of the sequence resulting from the initial conditions $\ic{3,1,4,2}$ are $3,1,4,2,5, 3, 6, 9, 7, 5, 3$. We now check that these satisfy all seven requirements:
\begin{itemize}
\item We have $H_G\p{K+2-b_0}=H_G\p{12+2-11}=H_G\p{3}=4=a_f$, as required.
\item We have $H_G\p{K+4-b_1}=H_G\p{12+4-14}=H_G\p{2}=1=a_g$, as required.
\item We have $H_G\p{K+3-b_0}+H_G\p{K+3-b_1}=H_G\p{12+3-11}+H_G\p{12+3-14}=H_G\p{4}+H_G\p{1}=2+3=5$, as required.
\item We have $H_G\p{K-5}=H_G\p{7}=6=11-5$, as required.
\item We have $H_G\p{K+1-5}=H_G\p{8}=9=14-5$, as required.
\item We have $H_G\p{K-2}=H_G\p{10}=5$, as required.
\item In this case, $n_0=2$. We have $H_G\p{4}=2$, $H_G\p{9}=7$ and $H_G\p{6}=3$, $H_G\p{11}=3$. This means our initial conditions to the recurrence system are $\ic{0,1}$ to $f$ and $\ic{1,1}$ to $g$. Furthermore, we see that the Golomb-like system obtained is precisely the one in Proposition~\ref{prop:fg1}, so we obtain those sequences. (The initial conditions there are the first term of our $f$-initial condition, but the other terms here equal the next terms in those sequences.) We now observe that, in those sequences, for $n>2$, $f\p{n-1}\leq n+1$, $f\p{n}\leq n$, and $g\p{n}\leq n$, meaning this final restriction is satisfied.
\end{itemize}
The above means we have a solution of the form
\[
\begin{cases}
H_G\p{12+5k}=5k+11\\
H_G\p{12+5k+1}=5k+14\\
H_G\p{12+5k+2}=5f\p{k}+2\\
H_G\p{12+5k+3}=5\\
H_G\p{12+5k+4}=5g\p{k}-2,
\end{cases}
\]
beginning at index~$12$. Re-indexing and noting that the pattern actually starts earlier results in the desired solution.
\end{proof}

\begin{pro}\label{prop:h2}
The initial conditions $\ic{4,2,5,3,1,4,7,5}$ to the $H$-recurrence produce a solution of the following form for $k\geq3$:
\[
\begin{cases}
H_G\p{5k}=5k-9\\
H_G\p{5k+1}=5k-6\\
H_G\p{5k+2}=5f\p{k}+2\\
H_G\p{5k+3}=10\\
H_G\p{5k+4}=5g\p{k}-2,
\end{cases}
\]
where $f$ and $g$ are the sequences in Proposition~\ref{prop:fg12}.
\end{pro}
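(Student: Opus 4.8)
The plan is to derive Proposition~\ref{prop:h2} from Theorem~\ref{thm:infh}, exactly as Propositions~\ref{prop:v1}, \ref{prop:v2}, and~\ref{prop:h1} were derived from Theorems~\ref{thm:infv} and~\ref{thm:infh}. Concretely, I would invoke Theorem~\ref{thm:infh} with
\[
K=25,\quad b_0=16,\quad b_1=19,\quad b_2=2,\quad b_4=-2,\quad a_f=9,\quad a_g=6,\quad m=2.
\]
The first step is to confirm that these values meet the eight parameter hypotheses: the four congruences modulo~$5$, the inequalities $6\le b_0<K+2$ and $9\le b_1<K+3$, the positivity $a_f+a_g>0$, and $m\ge1$. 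One then observes that, for this choice, the induced Golomb-like system
\[
\begin{cases}
f\p{n}=g\pb{n-g\p{n-1}-\frac{b_4+2}{5}}+\frac{b_4-b_2+a_f}{5}\\
g\p{n}=f\pb{n-f\p{n}-\frac{b_2-2}{5}}+\frac{b_2-b_4+a_g}{5}
\end{cases}
\]
collapses to precisely the system of Proposition~\ref{prop:fg12}: the shift parameters vanish because $b_4+2=0$ and $b_2-2=0$, and the two additive constants become $1$ and $2$.

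Next I would compute the first twenty-five or so terms of the $H$-sequence generated by $\ic{4,2,5,3,1,4,7,5}$ and verify the seven restrictions on the initial condition demanded by Theorem~\ref{thm:infh}: the equalities $H_G\p{K+2-b_0}=a_f$ and $H_G\p{K+4-b_1}=a_g$ (restrictions~\ref{it:afh},~\ref{it:agh}); the sum condition $H_G\p{K+3-b_0}+H_G\p{K+3-b_1}=5m$ (restriction~\ref{it:5mh}); the value $H_G\p{K-2}=5m$ (restriction~\ref{it:b3h}); and the two families in restrictions~\ref{it:b0h} and~\ref{it:b1h}, each with $m=2$ members. For restriction~\ref{it:ich} I would compute $n_0=\fl{\frac{K+1}{5}}=5$, form the normalized windows of $H_G$-terms that play the role of initial conditions for the $f$- and $g$-sequences, check that they match the leading terms of the $f$- and $g$-sequences of Proposition~\ref{prop:fg12}, and verify the three sublinearity inequalities $f\p{n-1}\le n+\frac{b_2+3}{5}$, $f\p{n}\le n+\frac{b_2-2}{5}$, $g\p{n-1}\le n+\frac{b_4+2}{5}$ for the finitely many values of $n$ near $n_0$ that the eventual sublinearity of the Proposition~\ref{prop:fg12} sequences does not already cover. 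With every hypothesis in place, Theorem~\ref{thm:infh} delivers a solution of the stated five-fold interleaved form starting at index~$K=25$; re-indexing, together with the observation that the very same pattern already holds from index~$15$ (that is, from $k=3$), yields the displayed formulas.

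There is no real conceptual obstacle: the argument is a bounded, mechanical check built on top of Theorem~\ref{thm:infh} and Proposition~\ref{prop:fg12}, so the substance is careful bookkeeping. The step that needs the most attention is restriction~\ref{it:ich} — one must take enough terms of the $H$-sequence that the initial-condition windows for $f$ and $g$ are completely populated, align those windows with the correctly index-shifted windows of the Proposition~\ref{prop:fg12} sequences, and make sure they genuinely regenerate those sequences rather than some other solution of the same Golomb-like system. This same index shift is what fixes exactly which arguments of $f$ and $g$ appear in the final displayed formulas, so it should be tracked explicitly.
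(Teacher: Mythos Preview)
Your proposal is correct and follows essentially the same approach as the paper: the paper also invokes Theorem~\ref{thm:infh} with exactly the parameters $K=25$, $b_0=16$, $b_1=19$, $b_2=2$, $b_4=-2$, $a_f=9$, $a_g=6$, $m=2$, computes the first $24$ terms of $H_G$, checks the seven restrictions (with $n_0=5$ and initial windows $\ic{0,1,1,2,2}$ for $f$ and $\ic{1,1,2,3,3}$ for $g$ matching Proposition~\ref{prop:fg12}), and then re-indexes.
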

\begin{proof}
Let $K=25$, $b_0=16$, $b_1=19$, $b_2=2$, $b_4=-2$, $a_f=9$, $a_g=6$, and $m=2$. These values satisfy all of the requirements on these parameters. The first $24$ terms of the sequence resulting from the initial conditions $\ic{4,2,5,3,1,4,7,5}$ are
\[
4, 2, 5, 3, 1, 4, 7, 5, 3, 6, 9, 7, 10, 8, 6, 9, 12, 10, 13, 11, 14, 12, 10, 13.
\]
We now check that these satisfy all seven requirements:
\begin{itemize}
\item We have $H_G\p{K+2-b_0}=H_G\p{25+2-16}=H_G\p{11}=9=a_f$, as required.
\item We have $H_G\p{K+4-b_1}=H_G\p{25+4-19}=H_G\p{10}=6=a_g$, as required.
\item We have $H_G\p{K+3-b_0}+H_G\p{K+3-b_1}=H_G\p{25+3-16}+H_G\p{25+3-19}=H_G\p{12}+H_G\p{9}=7+3=10$, as required.
\item We have $H_G\p{K-10}=H_G\p{15}=6=16-10$, and $H_G\p{K-5}=H_G\p{20}=11=16-5$, as required.
\item We have $H_G\p{K+1-10}=H_G\p{16}=9=19-10$, and $H_G\p{K+1-5}=H_G\p{21}=14=19-5$, as required.
\item We have $H_G\p{K-2}=H_G\p{23}=10$, as required.
\item In this case, $n_0=5$. We have $H_G\p{2}=2$, $H_G\p{7}=7$, $H_G\p{12}=7$, $H_G\p{17}=12$, $H_G\p{22}=12$ and $H_G\p{4}=3$, $H_G\p{9}=3$, $H_G\p{14}=8$, $H_G\p{19}=13$, $H_G\p{24}=13$. This means our initial conditions to the recurrence system are $\ic{0,1,1,2,2}$ to $f$ and $\ic{1,1,2,3,3}$ to $g$. Furthermore, we see that the Golomb-like system obtained is precisely the one in Proposition~\ref{prop:fg12}, so we obtain those sequences. (The initial conditions there are the first three terms of our initial conditions, but the other terms here equal the next terms in those sequences.) We now observe that, in those sequences, for $n>5$, $f\p{n-1}\leq n+1$, $f\p{n}\leq n$, and $g\p{n}\leq n$, meaning this final restriction is satisfied.
\end{itemize}
The above means we have a solution of the form
\[
\begin{cases}
H_G\p{25+5k}=5k+16\\
H_G\p{25+5k+1}=5k+19\\
H_G\p{25+5k+2}=5f\p{k}+2\\
H_G\p{25+5k+3}=10\\
H_G\p{25+5k+4}=5g\p{k}-2,
\end{cases}
\]
beginning at index~$25$. Re-indexing and noting that the pattern actually starts earlier results in the desired solution.
\end{proof}

\section{Conclusion}

This study sheds light on a new kind of solution while exploring the mysterious nature behind the $V$-recurrence. Especially, $V_{c}(n)$ can be seen as a fascinating example for coexistence of order and chaos in a meta-Fibonacci recurrence although in that case chaotic behaviour brings about termination of corresponding sequence after billions of terms. This perfect mixture of regularity and irregularity reminds that results of Pinn’s study that suggests a physical picture such as terms of random walks in some bizarre surrounding could perhaps help to better understand some of the interesting properties of certain chaotic meta-Fibonacci sequences ~\cite{9}. Indeed, for example, it would be remarkably interesting if the sequences obtained in this study would be helpful to model and calculate the transport of atoms  by altering the site number of the potential in terms of localization and dislocalization properties of the quasi-periodic lattices ~\cite{20}. Some future works could potentially focus on such similar physical application attempts of these curious family of nonlinear recurrences. At the same time, it is known that finding different meta-Fibonacci recurrences with similar behavior is significant and has been an essential key to the substantial progress in terms of new directions in this research area ~\cite{1}. In that direction, this study also provides new connections for two essential nested recurrence relations which are represented by $V$ and $H$ and corresponding results strongly suggest that Hofstadter-Huber generalization is fruitful to discovery for new curious solution sequence families, especially for quasi-periodic relations that this study focuses on.
%\nocite{oreg,schn,pond,smith,marg,hunn,advi,koha,mouse}

%%%%%%%%%%%%%%%%%%%%%%%%%%%%%%%%%%%%%%%%%%%%%%
%%                                          %%
%% Backmatter begins here                   %%
%%                                          %%
%%%%%%%%%%%%%%%%%%%%%%%%%%%%%%%%%%%%%%%%%%%%%%

\begin{backmatter}

\section*{Competing interests}
  The authors declare that they have no competing interests.

\section*{Authors' contributions}
  Altug Alkan is the main contributor of this research. Altug Alkan, Orhan Ozgur Aybar and Zehra Akdeniz analysed the chaotic behaviour of sequence in Section 2, they collaborate in this section. Nathan Fox give significant support in proof related works of this study and Nathan Fox and Altug Alkan collaborate in proof of solutions in section 3. All authors read and approved the final manuscript.

\section*{Acknowledgements}
 Authors would like to thank Robert Israel for his valuable help on Maple-related requirements of this study. Altug Alkan also would like to thank Rémy Sigrist and Giovanni Resta for their valuable computational assistance especially regarding to OEIS contributions such as A309567, A309636, A309650, A309704 and helpful feedbacks about some related sequences of this study.
 
 \section*{Funding}
There is no funding request for this article.

\end{backmatter}
\end{document}